\documentclass[12pt,ltjsarticle]{amsart}
\usepackage[mathscr]{eucal}
\usepackage{amsmath}
\usepackage{amsthm}
\usepackage{amssymb}
\usepackage{amscd}
\usepackage{xfrac}
\usepackage{latexsym}
\usepackage{a4wide}
\usepackage{multirow}
\usepackage{color}
\theoremstyle{plain}
\newtheorem{themain}{Theorem}

\newtheorem*{theorem*}{Classical facts}
\newtheorem{theorem}{Theorem}[section]
\newtheorem{corollary}{Corollary}[section]
\newtheorem{lemma}{Lemma}[section]
\newtheorem{proposition}{Proposition}[section]
\theoremstyle{definition}
\newtheorem{remark}{Remark}[section]
\newtheorem{definition}{Definition}[section]

\newcommand{\diag}{\operatorname{diag}}
\newcommand{\h}{\mathbb H^3}

\newcommand{\R}{\mathbb R}
\newcommand{\C}{\mathbb C}

\newcommand{\D}{\mathbb D}
\newcommand{\Ht}{\mathbb H}
\newcommand{\Geo}{\mathrm{Geo}(\mathbb{H}^3)}
\newcommand{\p}{\partial }
\newcommand{\bp}{\bar \partial }
\newcommand{\dz}{\mathrm{d}z}
\newcommand{\dzb}{\mathrm{d}\bar z}
\newcommand{\SL}{\mathrm{SL}_{2}\mathbb{C}}
\newcommand{\slt}{\mathfrak {sl}_2 \mathbb C}
\newcommand{\ISU}{\mathrm {SU}_{1, 1}}
\newcommand{\SU}{\mathrm {SU}_2}
\newcommand{\isu}{\mathfrak {su}_{1, 1}}
\newcommand{\su}{\mathfrak {su}_2}
\newcommand{\LISU}{\Lambda \mathrm {SU}_{1, 1 \tau}}
\newcommand{\LSU}{\Lambda \mathrm {SU}_{2 \tau}}
\newcommand{\Uone}{\mathrm U_1}
\newcommand{\UH}{\mathrm{U}\mathbb{H}^3}
\newcommand{\Gr}{\mathrm{Gr}_{1,1}(\mathbb{E}^{1,3})}
\newcommand{\LSL}{\Lambda\mathrm{SL}_{2}\mathbb{C}_{\tau}}
\newcommand{\lslt}{\Lambda\mathfrak{sl}_{2}\mathbb{C}_{\tau}}

\numberwithin{equation}{section}
\def\Vec#1{\mbox{\boldmath $#1$}}

\newcommand{\eq}[1]{\begin{align*}#1 \end{align*}}

\renewcommand{\l}{\lambda}
\begin{document}
\title[CGC surfaces in $\mathbb{H}^3$]{A classification 
 of constant Gaussian curvature surfaces in the three-dimensional hyperbolic space}
\author[J.~Inoguchi]{Jun-ichi Inoguchi}
 \address{Department of Mathematics, 
Hokkaido University, Sapporo, 
060-0810, Japan}
 \email{inoguchi@math.sci.hokudai.ac.jp}
 \thanks{The first named author is partially supported by Kakenhi 24540063, 15K04834, 19K03461, 23K03081}
\author[S.-P.~Kobayashi]{Shimpei Kobayashi}
\address{Department of Mathematics, 
Hokkaido University, Sapporo, 
060-0810, Japan}
\email{shimpei@math.sci.hokudai.ac.jp}
\thanks{The second named author is partially supported by Kakenhi 
26400059, 18K03265, 22K03265}

\subjclass[2020]{Primary~53A10, 58D10, Secondary~53C42, 53C43}
\keywords{Constant Gaussian curvature; hyperbolic 3-space; loop groups}
\date{\today}
\begin{abstract}
 We classify weakly complete constant Gaussian 
 curvature   $-1<K<0$ surfaces in the hyperbolic three-space 
 in terms of holomorphic quadratic differentials.
 For this purpose, we first establish a loop group method for constant Gaussian curvature 
  surfaces with $K>-1$ and $K \neq 0$ via the harmonicity
 of the Lagrangian and Legendrian Gauss maps. 
 We then show that a spectral parameter deformation of the 
 Lagrangian harmonic Gauss map gives 
 a harmonic map into the hyperbolic two-space for $-1< K<0$ or the two-sphere for 
 $K>0$, respectively. Consequently, weakly complete constant Gaussian curvature surfaces with $-1 < K <0$ are in one-to-one correspondence with holomorphic quadratic differentials on the unit disk or the complex plane.
\end{abstract}
\maketitle

\section*{Introduction}
 As is well known, Hilbert 
 proved the nonexistence of isometric immersions of the hyperbolic plane $\mathbb H^2$
 into the Euclidean three-space $\mathbb{E}^3$. 
 Analogous results hold for surfaces in the unit three-sphere 
 $\mathbb{S}^3$ and the hyperbolic three-space $\mathbb{H}^3$ as follows; 
 see, for example, Spivak \cite{Spivak}:
\begin{theorem*}
\mbox{}
\begin{enumerate}
 \item 
 There is no complete surface in 
$\mathbb{S}^3$ with constant Gaussian curvature $K<1$ and $K\neq 0$, 
and the only complete immersions of constant Gaussian curvature $K\geq 1$
  in  $\mathbb{S}^3$ are totally umbilic round spheres.
 In particular, the only complete 
 surfaces with $K=1$ are great spheres.
 \item There is no complete surface in $\mathbb{H}^3$ with constant Gaussian curvature 
 $K <-1$, and the only complete immersions of constant Gaussian curvature $K\geq 0$ 
 in $\mathbb H^3$ are totally umbilic spheres or equidistant surfaces from geodesics
 $($only in the case $K=0)$. 
\end{enumerate}
\end{theorem*}
%

 On the one hand, complete surfaces with $K=0$ 
 in $\mathbb{S}^3$ \cite{Kit} and with $K=-1$ in $\mathbb{H}^3$ \cite{Nomizu} 
 are classical. 
 At the time when Spivak wrote his book \cite{Spivak}, the only known examples 
 of complete surfaces with constant Gaussian curvature $-1<K<0$ in $\mathbb{H}^3$ 
 were equidistant surfaces and surfaces of revolution. 
 Subsequently, Rosenberg and Spruck \cite{RS} proved that for any smooth curve 
in the ideal boundary of $\mathbb{H}^3$ and any $k\in(-1,0)$, there exists an embedded complete surface with Gaussian curvature $K=k$ whose boundary at infinity is 
the given curve. Other outstanding results about complete constant Gaussian curvature 
 surfaces solving a 
 Plateau problem at infinity were also given by Labourie \cite{La}.

 In our previous paper with Brander \cite{BIK}, we gave a loop group method 
 for constructing constant Gaussian curvature surfaces in $\mathbb{S}^3$ with 
  $K<1$ and $K\not=0$. 
 The key observation in \cite{BIK} is that every constant Gaussian curvature surface
 is uniquely determined by a Lorentzian harmonic 
 map into the unit two-sphere $\mathbb{S}^2$. The Lorentzian harmonic map is a
 normal Gauss map of the surface.
 On the other hand, in \cite{DIK1}, we established a unified loop group 
 method for constant mean curvature surfaces in $\mathbb{H}^3$. 
 The loop group method of \cite{DIK1} is based on the characterization of 
 constant mean curvature condition in terms of a Gauss map taking values in 
 the unit tangent sphere bundle $\UH$ of the hyperbolic three-space.

 The first purpose of the present paper is to establish a loop group method 
 for constant Gaussian curvature surfaces with $K>-1$ and $K \neq 0$ in $\mathbb{H}^3$.
 In contrast to the $\mathbb{S}^3$ case, there are no naturally associated 
the hyperbolic two-space-valued harmonic maps for constant Gaussian curvature surfaces in $\mathbb{H}^3$.
 Thus it is not easy to modify \cite{BIK} to the $\mathbb{H}^3$ case. 
 To overcome this difficulty, we first 
 establish a Ruh-Vilms-type characterization for 
 Legendrian and Lagrangian Gauss maps of constant Gaussian curvature surfaces in $\mathbb{H}^3$: 
\begin{themain}\label{thm:Ruh-Vilms}
Let $f: M \to \h$ be a surface with Gaussian curvature $K>-1$ 
and unit normal $n$.  Let $F=(f,n): M \to \UH$ and $L = f \wedge n: M \to \Geo$ be the Legendrian and Lagrangian 
 Gauss maps of $f$, respectively. 
 Then the following statements are mutually equivalent{\rm:}
 \begin{enumerate}
\item The Gaussian curvature of $f$ is constant. 
\item The Legendrian Gauss map $F$ is harmonic.
\item The Lagrangian Gauss map $L$ is harmonic.
\item The Klotz differential is holomorphic. 
\end{enumerate}
\end{themain}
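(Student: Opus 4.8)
The plan is to fix, once and for all, the conformal structure on $M$ coming from the \emph{second} fundamental form, and to show that, relative to it, each of (2), (3) and (4) collapses to one and the same scalar equation, the constancy of $K$. By the Gauss equation in the space form $\h$ one has $K = -1 + \det S$ for the shape operator $S$, so the hypothesis $K>-1$ forces the extrinsic curvature $\det S = \kappa_1\kappa_2$ to be positive; after reversing $n$ if necessary, both principal curvatures are positive and the second fundamental form $II$ is a Riemannian metric. This choice of conformal structure — $II$ rather than the induced metric $I$ — is precisely what separates the present constant-Gaussian-curvature characterization from the constant-mean-curvature one. I would then pick a local conformal coordinate $z$ for $II$, write $II = e^{2\sigma}\,\dz\,\dzb$ and $I = Q\,\dz^2 + \bar Q\,\dzb^2 + \rho\,\dz\,\dzb$ with $\rho>0$, and take $Q\,\dz^2$ as the Klotz differential. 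Since all four conditions are local, it suffices to compare them in this coordinate.

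For $(1)\Leftrightarrow(4)$ I would write out the Gauss and Codazzi equations. Because $\h$ has constant sectional curvature, the ambient curvature term in the Codazzi equation drops out and $S$ is a Codazzi tensor, $(\nabla_X S)Y = (\nabla_Y S)X$, exactly as in Euclidean space; in a principal-curvature frame this is the pair $e_2\kappa_1 = (\kappa_2-\kappa_1)\,\omega(e_1)$ and $e_1\kappa_2 = (\kappa_2-\kappa_1)\,\omega(e_2)$, where $\omega$ is the Levi-Civita connection form of $I$. Expressing the $II$-connection form in terms of $\omega$ and the $\kappa_i$ and then differentiating $Q$, one obtains an identity exhibiting $\bp Q$ as a nonvanishing multiple of $\p(\kappa_1\kappa_2)$. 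As $K = -1 + \kappa_1\kappa_2$ is real, this yields $\bp Q = 0 \iff \p K = 0 \iff K$ constant, which is $(1)\Leftrightarrow(4)$.

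For the two Gauss maps I would pass to the loop-group picture, realizing $\h = \SL/\SU$ and lifting the adapted frame to $\Phi\colon M\to\SL$. The space of geodesics $\Geo=\Gr$ is a pseudo-Riemannian (neutral, para-K\"ahler) symmetric space $\SL/K_{0}$ with reductive splitting $\slt = \mathfrak{k}_{0}\oplus\mathfrak{m}_{0}$, and $L$ is $\Phi$ followed by the projection $\SL\to\Geo$. Decomposing $\alpha = \Phi^{-1}\,\mathrm d\Phi = \alpha_{\mathfrak{k}_{0}} + \alpha_{\mathfrak{m}_{0}}$ and using that a map into a pseudo-Riemannian symmetric space is harmonic with respect to the $II$-conformal structure precisely when $\mathrm d\!\star\!\alpha_{\mathfrak{m}_{0}} + [\alpha_{\mathfrak{k}_{0}}\wedge\star\alpha_{\mathfrak{m}_{0}}]=0$ (equivalently, when the loop $\alpha_{\mathfrak{k}_{0}} + \l^{-1}\alpha'_{\mathfrak{m}_{0}} + \l\,\alpha''_{\mathfrak{m}_{0}}$ of connections, with $'$ and $''$ the $(1,0)$- and $(0,1)$-parts, is flat for all $\l\in\C^{*}$), I would compute $\alpha_{\mathfrak{m}_{0}}$ from the frame equations and verify that this condition reduces, modulo the Maurer-Cartan (Gauss-Codazzi) equations that hold identically, to $\p K = 0$; this gives $(1)\Leftrightarrow(3)$. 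As a consistency check, the $(2,0)$-part of $L^{*}g_{\Geo}$, which by the Cayley-Hamilton relation $III = 2H\,II - (\det S)\,I$ is a nonzero multiple of $Q\,\dz^2$, is the Hopf differential of the harmonic map $L$ and so must be holomorphic, reconciling (3) with (4). For the Legendrian map $F = (f,n)$ the target $\UH$ is a five-dimensional pseudo-Riemannian contact manifold which is \emph{not} a symmetric space, so the criterion above is unavailable; here I would use the fibration $\pi\colon\UH\to\Geo$ sending a unit tangent vector to the geodesic it generates, under which $L = \pi\circ F$, together with the fact that $F$ is horizontal for the contact distribution (the Legendrian condition, automatic for a Gauss map), to match the tension field of $F$ with that of $L$ and obtain $(2)\Leftrightarrow(3)$; alternatively one computes the tension field of $F$ directly and reduces it to the same equation.

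The main obstacle is the one flagged in the introduction: unlike the $\mathbb S^3$ case there is no accompanying $\mathbb H^2$-valued Riemannian harmonic map, and the natural targets are indefinite. The delicate points are therefore (i) pinning down the symmetric decomposition $\slt=\mathfrak{k}_{0}\oplus\mathfrak{m}_{0}$ for $\Geo$ and carrying the neutral-signature signs through the tension-field computation without breaking the equivalence; (ii) treating the Legendrian map $F$, for which no symmetric-space shortcut exists; and (iii) in the direction $(4)\Rightarrow(2),(3)$, upgrading the holomorphicity of the single quadratic differential $Q$ to the full harmonic-map system, whose remaining components must be supplied automatically by the Gauss equation. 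Keeping the three reductions consistent — all landing on $\bp Q=0$ — is where the computation demands the most care.
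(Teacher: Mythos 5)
Your overall strategy coincides with the paper's: fix the conformal structure determined by $\mathrm{I\!I}$, prove $(1)\Leftrightarrow(4)$ from the Codazzi equation, and test harmonicity of the Gauss maps through the Maurer--Cartan form of a frame in $\SL$, reducing everything to $\bp Q=0$. For the Lagrangian map $L$ your route is exactly the paper's (symmetric-space criterion, equivalently flatness of the $\lambda$-family of connections). The real divergence is your treatment of the Legendrian map $F$, and there your proposal contains a factual error: the first-order harmonicity criterion is \emph{not} unavailable for $\UH$. The criterion \eqref{eq:harmonicity} holds for reductive homogeneous semi-Riemannian spaces, and $\UH=\SL/\Uone$ is naturally reductive; the only price of non-symmetry is the extra condition $[\alpha^{\prime}_{\mathfrak p}\wedge\alpha^{\prime\prime}_{\mathfrak p}]_{\mathfrak p}=0$, which holds automatically here because $[\Vec{e},\Vec{e}^{*}]=(e^{u}-|Q|^{2}e^{-u})\Vec{e}_1$ while $\dz\wedge\dzb$ is purely imaginary on real vectors, so the bracket takes values in $\mathfrak{u}_1=\mathbb{R}(i\Vec{e}_1)$. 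This is precisely how the paper proves $(1)\Leftrightarrow(2)$, and the identical computation then serves for $(1)\Leftrightarrow(3)$. Your alternative -- comparing tension fields through the submersion $\pi_0:\UH\to\Geo$ using horizontality of $F$ -- can be made to work and is arguably more geometric, but as sketched it has a gap: horizontality only yields $\mathrm{d}\pi_0(\tau(F))=\tau(L)$ (since $\nabla \mathrm{d}\pi_0$ vanishes on pairs of horizontal vectors), i.e.\ $F$ harmonic $\Rightarrow L$ harmonic. For the converse you must also show that the \emph{vertical} component of $\tau(F)$ vanishes identically, which does not follow from horizontality alone; it is true here because the fibers of $\pi_0$ are orbits of right translation by $\exp(t\Vec{e}_1)$, an isometry of the naturally reductive metric, so the vertical field is Killing -- but you supply no such argument.

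A second, smaller gap sits in $(1)\Leftrightarrow(4)$: the Codazzi equation does not exhibit $\bp Q$ as a single nonvanishing multiple of $\p K$. From \eqref{eq:Codazzieq} one gets
\begin{equation*}
2\sigma\,\bp Q=-\sigma^{-1}\Bigl(Q\,\bp K+\tfrac12\bigl(e^{u}+|Q|^{2}e^{-u}\bigr)\p K\Bigr),
\end{equation*}
a combination of $\p K$ \emph{and} $\bp K$. So $(1)\Rightarrow(4)$ is immediate, but for $(4)\Rightarrow(1)$ you must couple this identity with its complex conjugate and invert the resulting $2\times2$ linear system in $(\p K,\bp K)$; its determinant equals $\tfrac14(e^{u}-|Q|^{2}e^{-u})^{2}>0$ precisely because $\mathrm{I\!I}$ is definite ($e^{2u}>|Q|^{2}$), which is where the hypothesis $K>-1$ actually enters. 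This inversion step -- carried out in the paper by adding the conjugated Codazzi equation with suitable multipliers -- is missing from your argument as stated.
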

 Note that $\Geo$ is the space of all oriented geodesics of $\mathbb H^3$. The precise definitions of the Klotz differential, Legendrian and Lagrangian 
 Gauss maps are given in Section \ref{sc:pre}, and the proof of Theorem 
 \ref{thm:Ruh-Vilms} is given in Section \ref{sbsc:Gauss}.
 From Theorem \ref{thm:Ruh-Vilms} and the structure equations 
 of a constant Gaussian curvature  surface $f$,
 there is a family of  constant Gaussian curvature surfaces $f^{\lambda}$, 
 the so-called  \textit{associated family}, parameterized by 
 the \textit{spectral parameter} $\lambda \in \mathbb S^1 \subset \mathbb C$ 
 such that $f^{\l}|_{\l=1}=f$. 

 The second purpose of the present paper is to characterize 
 constant Gaussian curvature surface with $K>-1$ and $K\neq 0$
 in terms of other simple harmonic maps, using a loop group method.
 For this purpose, we consider a deformation of the spectral parameter off the unit circle
 to the spectral parameter family of Lagrangian Gauss maps.
 We then prove that every constant Gaussian curvature 
 surface with $K>-1$ and $K\neq 0$ is recovered from the Lax-type equation 
 of harmonic maps into $\mathbb{H}^2$ and $\mathbb S^2$, respectively$:$
\begin{themain}\label{thm:realHarmonic}
 Let $f : M \to \h$ be a constant Gaussian curvature surface with 
 $K>-1$ and $K\neq 0$, 
 and let $f^{\l}$ denote 
 the associated family and $n^{\l}$ the corresponding family of unit normals to $f^{\l}$.
 Moreover, let $L^{\l} = f^{\l} \wedge n^{\l}$ be  a family of Lagrangian harmonic 
 Gauss  maps.
 Then the following statements hold$:$
\begin{enumerate}
 \item Assume $-1<K<0$ and 
 take a constant $\l_0 \in \C \setminus\bar{\D}$ with  
 $\bar{\D}= \{ \lambda \in \C \mid |\lambda|\leq 1 \}$ such that 
$ |\l_0|= \exp \big(\operatorname{arcosh} \sqrt{-1/K}\big) $
 holds.
 Then the map $L^{\l}|_{\l=\l_0}$  is a  
 harmonic local diffeomorphism into  $\mathbb H^2$ up to  conjugation.

\item   Assume $K>0$ and take a constant $\l_0 \in \C \setminus\bar{\D}$ such that 
$ |\l_0| =\exp \big(\operatorname{arsinh} \sqrt{1/K}\big) $
 holds.
 Then the map $L^{\l}|_{\l=\l_0}$ 
 is a harmonic local diffeomorphism into $\mathbb S^2$ up to conjugation.
\end{enumerate}
 Moreover, if the surface $f^{\l}$ is not totally umbilic,
 the harmonic Lagrangian Gauss map $L^{\l}|_{\l= \l_0}$ is non-conformal. 
\end{themain}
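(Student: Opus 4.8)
The plan is to run a loop-group deformation of the spectral parameter on the extended frame of the Lagrangian Gauss map and to read off the target from the reality condition that the deformed frame acquires. By Theorem \ref{thm:Ruh-Vilms}, $L^{\l}$ is harmonic into $\Geo \cong \SL/\C^{\times}$, the pseudo-Riemannian symmetric space for the involution $\operatorname{Ad}\sigma_3$ with $\sigma_3=\diag(1,-1)$; the $\pm 1$-eigenspaces of $\operatorname{Ad}\sigma_3$ on $\slt$ are the diagonal part $\mathfrak h$ and the off-diagonal part $\mathfrak m$. Let $\Phi^{\l}\in\LSL$ be the extended frame with $(\Phi^{\l})^{-1} d\Phi^{\l} = \alpha^{\l} = \l^{-1}\alpha'_{\mathfrak m} + \alpha_{\mathfrak h} + \l\,\alpha''_{\mathfrak m}$, where $\alpha'_{\mathfrak m}$ and $\alpha''_{\mathfrak m}$ are the $(1,0)$- and $(0,1)$-parts of the $\mathfrak m$-component. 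First I would write these blocks explicitly from the structure equations of $f$: here $\alpha_{\mathfrak h}\in i\R\sigma_3$ is the induced connection in a unitary frame, the off-diagonal entries are built from the conformal factor and the second fundamental form, and their $(2,0)$-combination is the Klotz differential $Q\,\dz^2$.

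Next I would deform the spectral parameter. Fixing $\l_0\in\C\setminus\bar\D$ and writing $\mu=\l/\l_0$, set $\Psi^{\mu}:=\Phi^{\l_0\mu}$; its Maurer--Cartan form is $\beta^{\mu}=\l_0^{-1}\mu^{-1}\alpha'_{\mathfrak m}+\alpha_{\mathfrak h}+\l_0\mu\,\alpha''_{\mathfrak m}$, which is flat for every $\mu\in\C^{\times}$ because $\alpha^{\l}$ is. Hence $\Psi^{\mu}$ is again the extended frame of a family of harmonic maps of the same Riemann surface $M$, and $\Psi^{1}=\Phi^{\l_0}$ projects onto $L^{\l}|_{\l=\l_0}$. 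It therefore remains only to identify the symmetric space in which $\Psi^{\mu}$ takes values.

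This identification is the heart of the matter and the main obstacle. Both $\isu$ and $\su$ share the Cartan $i\R\sigma_3$, which already contains $\alpha_{\mathfrak h}$, and they differ only in the reality of their off-diagonal part. Imposing on $\beta^{\mu}$ the $\su$-reality $-\overline{\beta^{1/\bar\mu}}^{\,T}=\beta^{\mu}$, or the $\isu$-reality $-\sigma_3\overline{\beta^{1/\bar\mu}}^{\,T}\sigma_3=\beta^{\mu}$, and matching the $\mu^{\mp1}$-coefficients reduces to a single relation of the form $\l_0^{-1}\alpha'_{\mathfrak m}=\varepsilon\,\overline{\l_0\alpha''_{\mathfrak m}}^{\,T}$ with $\varepsilon=\mp1$. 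I would then show, from the explicit blocks, that this relation holds for one common positive real modulus $|\l_0|^2=r_0^2$, that $r_0$ is exactly the value in the statement, and that $\varepsilon$ selects $\su$ (hence $\mathbb S^{2}$) when $K>0$ and $\isu$ (hence $\Ht^{2}$) when $-1<K<0$. The cleanest calibration is against the totally umbilic models: a geodesic sphere of radius $\operatorname{arsinh}\sqrt{1/K}$ when $K>0$ and an equidistant surface at distance $\operatorname{arcosh}\sqrt{-1/K}$ when $-1<K<0$, whose Gauss maps fix $r_0=e^{\,\text{radius}}$ resp.\ $e^{\,\text{distance}}$. After conjugating by the fixed $C\in\SL$ coming from the frame normalization, $C\Psi^{\mu}C^{-1}\in\LSU$ resp.\ $\LISU$, so its projection, and hence $L^{\l}|_{\l=\l_0}$ up to conjugation, is harmonic into $\mathbb S^{2}=\SU/\Uone$ resp.\ $\Ht^{2}=\ISU/\Uone$. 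I expect the bookkeeping of this reality condition, in particular verifying that the two off-diagonal entries match for one common modulus and extracting $r_0$ and $\varepsilon$, to be the delicate part.

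Finally I would settle the two pointwise assertions. Since $\det S = K+1>0$, the shape operator is nondegenerate, so the off-diagonal blocks $\alpha'_{\mathfrak m},\alpha''_{\mathfrak m}$ are invertible and, with $\l_0\neq 0$, the $\mathfrak m$-part of $\beta^{1}$ is a real isomorphism onto the tangent space of the target; hence $L^{\l}|_{\l=\l_0}$ is a local diffeomorphism. For non-conformality, the Hopf differential of the harmonic map $L^{\l}|_{\l=\l_0}$ is the $(2,0)$-part of its pullback metric, namely $\langle \l_0^{-1}\alpha'_{\mathfrak m},\l_0^{-1}\alpha'_{\mathfrak m}\rangle$; evaluating the invariant bilinear form on the off-diagonal blocks gives $\l_0^{-2}$ times the Klotz differential $Q\,\dz^2$. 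By Theorem \ref{thm:Ruh-Vilms} this is holomorphic, and it vanishes identically exactly when $f$ is totally umbilic. Therefore, if $f^{\l}$ is not totally umbilic the Hopf differential is a nonzero $(2,0)$-form and $L^{\l}|_{\l=\l_0}$ is non-conformal, which completes the proof.
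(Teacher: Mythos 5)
Your proposal follows essentially the same route as the paper's proof: deform the spectral parameter off $\mathbb S^1$, impose the $\SU$- or $\ISU$-reality condition on the deformed Maurer--Cartan form, match the coefficients of $\l^{\mp 1}$ to get a single relation between the off-diagonal blocks, and observe that this forces $|\l_0|^2=(1+\sigma)/|1-\sigma|$, with the sign of $1-\sigma$ (i.e.\ of $K$) selecting $\ISU$ (hence $\mathbb H^2$) versus $\SU$ (hence $\mathbb S^2$); the facts that harmonicity persists for all $\l\in\C^{\times}$ and that the Hopf differential of $L^{\l}|_{\l=\l_0}$ is a nonzero constant multiple of the Klotz differential $Q\,\dz^2$ (so non-conformality is equivalent to non-umbilicity) are also exactly the paper's arguments. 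The only cosmetic difference is that the paper runs the reality computation directly on the Lax matrices $U(\l_0)$, $V(\l_0)$ rather than on your re-centered loop $\beta^{\mu}$, and it does not need your ``calibration against umbilic models'': the value of $|\l_0|$ falls out of the computation.

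The one step that does not survive scrutiny is your argument for the local-diffeomorphism property. You claim that $\det S=K+1>0$ makes the blocks $\alpha_{\mathfrak m}^{\prime}$, $\alpha_{\mathfrak m}^{\prime\prime}$ invertible, but from the explicit Lax pair
\[
\alpha_{\mathfrak m}^{\prime}(\p)=\frac{1+\sigma}{2}\bigl(e^{u/2}\Vec{\hat e}_2+Q e^{-u/2}\Vec{\hat e}_3\bigr)
=\frac{1+\sigma}{2}\begin{pmatrix}0 & i e^{u/2}\\ -iQe^{-u/2} & 0\end{pmatrix},
\]
whose determinant is $-\tfrac{(1+\sigma)^2}{4}\,Q$; this vanishes precisely at umbilic points, so individual invertibility of the blocks fails exactly where one must still prove the statement, and it is not the right criterion in any case. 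What is needed is injectivity of the combined real-linear map $X\mapsto \l_0^{-1}\alpha_{\mathfrak m}^{\prime}(X)+\l_0\,\alpha_{\mathfrak m}^{\prime\prime}(X)$, whose determinant (in the basis $\{\Vec{\hat e}_2,\Vec{\hat e}_3\}$, after complexification) equals $-\tfrac{K}{4}\,(e^{u}-|Q|^{2}e^{-u})$: the relevant hypotheses are $K\neq 0$ and the immersion condition $e^{2u}>|Q|^{2}$, not $K+1>0$. The paper handles this by computing the full pullback metric of $L^{\l}|_{\l=\l_0}$, with $(2,0)$-part $-Ke^{-2i\theta}Q$ and $(1,1)$-part $\pm K(e^{u}+|Q|^{2}e^{-u})/2$, whose determinant $K^{2}\bigl((e^{u}-|Q|^{2}e^{-u})/2\bigr)^{2}$ is everywhere positive, giving the local diffeomorphism property also at umbilic points. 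Since you compute the Hopf differential anyway, this is a repair of your argument rather than a genuinely different one; but as written, your step would fail on the umbilic set.
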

 The proof is given in Section  \ref{sbsc:dLG}.

  The final and principal purpose of the present paper is a classification of
 constant Gaussian curvature surfaces with $-1< K<0$. 
 By Theorems \ref{thm:Ruh-Vilms} and \ref{thm:realHarmonic}, 
 we obtain the harmonic local diffeomorphism $L^{\l}|_{\l=\l_0}$ 
 into $\mathbb H^2$
 corresponding to a constant Gaussian curvature surface $f$. It is known  \cite{TW} that all 
 complete $\p$-energy harmonic maps  
 from the unit disc $\D$ or the complex plane $\mathbb C$ into $\mathbb H^2$ 
 bijectively correspond  to holomorphic quadratic differentials. 
 In our case the holomorphic quadratic 
 differentials are exactly the Klotz differentials in  Theorem \ref{thm:Ruh-Vilms},
 however, the induced metric of $f$ is not related to 
 the $\p$-energy of the harmonic map.  
 We thus introduce another natural metric for a surface in $\mathbb H^3$ 
 in terms of the unit tangent sphere bundle $\UH$:
 The so-called \textit{weak metric} is defined by
\[
 \mathrm{d} t^2 = \langle \mathrm{d} f, \mathrm{d} f\rangle +\frac1{1+K} \langle\mathrm{d} n, \mathrm{d} n \rangle.
\]
 We then show the equivalence between the 
 completeness of the weak metrics and that of the $\p$-energy metric;
 see Lemma \ref{lm:completeness}.  
 Our main result of the paper is the following theorem.
\begin{themain}\label{thm:classification}
 Let \(\mathcal{K}\) denote the set of equivalence classes $($under rigid motions of \(\mathbb H^3\)$)$ of weakly complete constant Gaussian curvature surfaces with \(-1<K<0\), where the conformal type is either \(\mathbb D:=\{z\in\mathbb{C}\mid |z|\le 1\}\) or the complex plane \(\mathbb{C}\). Let \(\mathcal{QD}\) denote the set of equivalence classes $($under M\"obius transformations of \(\mathbb D\) or \(\mathbb{C}\)$)$ of holomorphic quadratic differentials on \(\mathbb D\) or \(\mathbb{C}\), not constant on \(\mathbb{C}\). Then there is a bijection between \(\mathcal{K}\) and \(\mathcal{QD}\).
\end{themain}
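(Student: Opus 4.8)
The plan is to factor the asserted bijection through the class of complete $\partial$-energy harmonic local diffeomorphisms into $\mathbb{H}^2$, and then to appeal to \cite{TW}. Concretely, to a weakly complete surface $f$ representing a class in $\mathcal{K}$ I would associate the harmonic map $h:=L^{\l}|_{\l=\l_0}:M\to\mathbb{H}^2$ furnished by Theorem~\ref{thm:realHarmonic}(1), where $|\l_0|=\exp(\operatorname{arcosh}\sqrt{-1/K})$. By Theorem~\ref{thm:Ruh-Vilms} the Klotz differential $Q_f$ of $f$ is a holomorphic quadratic differential, and, since $h$ arises from the spectral deformation of the Lagrangian Gauss map, its Hopf differential agrees with $Q_f$ up to a nonzero constant fixed by $K$. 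First I would record that $Q_f$ is invariant under rigid motions of $\h$, because it is built from isometry-invariant data of $f$, while a change of conformal coordinate on $M$ pulls it back; hence $[f]\mapsto[Q_f]$ is a well-defined map $\mathcal{K}\to\mathcal{QD}$ once the target is quotiented by M\"obius transformations.

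Next I would match the two completeness notions. Since the induced metric of $f$ is not the $\partial$-energy metric of $h$, I would pass through the weak metric $\mathrm{d}t^2$ and invoke Lemma~\ref{lm:completeness} to conclude that $f$ is weakly complete if and only if $h$ has complete $\partial$-energy. The result of \cite{TW} then identifies complete $\partial$-energy harmonic maps $M\to\mathbb{H}^2$ bijectively with their Hopf differentials up to M\"obius transformations; in the case $M=\C$ the map is a genuine local diffeomorphism exactly when the differential is non-constant, a constant one producing a degenerate harmonic map whose image is a single geodesic and which therefore cannot come from an immersed constant Gaussian curvature surface. This accounts for the exclusion of constant differentials on $\C$ in $\mathcal{QD}$, and the dichotomy $\D$ versus $\C$ is preserved because a weakly complete simply connected such surface is uniformized to exactly one of these, matching the domain of $h$.

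The heart of the argument, and the step I expect to be the main obstacle, is the inverse construction showing that $[f]\mapsto[Q_f]$ is bijective. Given $[Q]\in\mathcal{QD}$, the map from \cite{TW} yields a complete $\partial$-energy harmonic map $h:M\to\mathbb{H}^2$ with Hopf differential $Q$, and I must recover a weakly complete surface $f$ with $-1<K<0$, $L^{\l}|_{\l=\l_0}=h$ and $Q_f=Q$. The strategy is to run the loop group construction in reverse: the extended frame of $h$ solves a linear system whose coefficients are polynomial in the spectral parameter, hence extends real-analytically in $\l$, and undoing the spectral deformation of Theorem~\ref{thm:realHarmonic} (returning the distinguished value from $\l_0$ to the unit circle) produces the $\l$-family of extended frames of a constant Gaussian curvature surface, whose member at $\l=1$ is the sought $f$. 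The technical core is to verify that the frame of $h$, after the inverse shift, satisfies the reality condition selecting the correct real form along $|\l|=1$, and that the resulting $f$ is globally defined and immersed on all of $M$ without period obstruction; here the simple connectedness of $M$ and the explicit form of the deformation in Theorem~\ref{thm:realHarmonic} are essential. Granting this, uniqueness of extended frames gives $L^{\l}|_{\l=\l_0}=h$ and $Q_f=Q$ for the reconstructed surface, so the two assignments are mutually inverse.

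Finally I would check compatibility with the equivalence relations. A rigid motion of $\h$ multiplies the extended frame on the left by a constant and thus carries $h$ to an isometric harmonic map while leaving $Q_f$ unchanged; conversely an isometry of $\mathbb{H}^2$ post-composed with $h$ corresponds to a rigid motion of the reconstructed surface. A change of conformal coordinate, an automorphism of $\D$ or an affine automorphism of $\C$, that is a M\"obius transformation, reparametrizes $f$ and $h$ simultaneously and pulls back $Q$. Hence both assignments descend to $\mathcal{K}$ and $\mathcal{QD}$ and remain mutually inverse, and combining the forward map, the completeness translation of Lemma~\ref{lm:completeness}, the bijection of \cite{TW}, and the loop group reconstruction yields the desired bijection $\mathcal{K}\cong\mathcal{QD}$.
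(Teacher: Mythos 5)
Your proposal shares the paper's overall skeleton: the forward map is $[f]\mapsto[Q_f]$, well defined by Theorem~\ref{thm:Ruh-Vilms} and rigid-motion invariance; completeness is translated through Lemma~\ref{lm:completeness}; and the inverse rests on the existence--uniqueness theory of \cite{TW}. The genuine difference is how the inverse is built. The paper never reverses the spectral deformation: given $Q$, it solves the Gauss equation $\bp\p u+\tfrac{K}{2}(e^u-|Q|^2e^{-u})=0$ directly (after $z\mapsto\sqrt{-K}z$ this is the harmonic-map equation of \cite{TW}), takes the unique solution with $e^u\,\dz\dzb$ complete, obtains non-degeneracy from the inequality $e^{2u}>|Q|^2$ of \cite[Lemma 1.3 (2)]{TW}, and then invokes the fundamental theorem of surface theory; Lemma~\ref{lm:completeness} then gives weak completeness. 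Your route through extended frames is viable in principle, and is in fact already available in the paper as the converse theorem at the end of Section~\ref{sc:harmonicGauss}, which you could cite rather than re-derive.

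Two of your steps, however, are genuine gaps rather than deferred technicalities. First, you ``grant'' that the reconstructed map is an immersion on all of $M$. That is precisely the step the paper settles with $e^{2u}>|Q|^2$ from \cite{TW}; it is not automatic, and it is exactly what fails in the analogous $K>0$ problem (the paper's remark notes that $e^{2u}=|Q|^2$ can occur there), so it is the mathematical heart of the inverse, not a verification to be waved through. (By contrast, the period obstruction you worry about is vacuous: the domain is simply connected and the frame is obtained by integrating a flat connection.) Second, your constants do not close up: if $h$ is the \cite{TW} harmonic map with Hopf differential $Q\,\dz^2$, then the curvature-$K$ surface reconstructed from $h$ has Klotz differential $-K^{-1}e^{2i\theta}Q\,\dz^2$ (cf.\ the computation $\langle\p L^{\l},\p L^{\l}\rangle|_{\l=\l_0}=-Ke^{-2i\theta}Q$ in the proof of Theorem~\ref{thm:realHarmonic}), and on $\D$ this is in general \emph{not} M\"obius-equivalent to $Q\,\dz^2$; so, as written, the composite $\mathcal{QD}\to\mathcal{K}\to\mathcal{QD}$ is not the identity. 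The fix is to feed the suitably rescaled differential into \cite{TW}, equivalently to solve the $K$-weighted Gauss equation with the prescribed $Q$, which is what the paper does. Relatedly, your inverse must choose a value of $K\in(-1,0)$ (your $\l_0$ depends on it), and your argument nowhere fixes or eliminates this choice; the paper's proof leaves the same point implicit, but a complete argument for bijectivity has to address it, since non-congruent surfaces of different curvature can share the same Klotz class.
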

 The proof is given in Section 
 \ref{sbsc:classification}.
 In Corollary \ref{coro:equivariance}, 
 we show  that complete constant Gaussian curvature surfaces with $-1<K<0$
 are obtained by bounded holomorphic quadratic differentials.

 The paper is organized as follows: In Section \ref{sc:pre}, we  recall
 fundamental results and notions about surfaces in $\mathbb H^3$.
 In Section \ref{sc:harmonicGauss}, we discuss harmonicity
 of the Lagrangian and Legendrian Gauss maps for surfaces with Gaussian curvature 
 $K>-1$ and establish a loop group method for constant Gaussian curvature 
 surfaces.  In Section \ref{sc:Complete}, we classify weakly complete constant Gaussian curvature 
$-1< K <0$ surfaces  in terms of holomorphic quadratic differentials.

 After a previous version of this manuscript was circulated, 
 the related paper \cite{Kobayashi2} appeared, 
 developing the complex landslide flow via integrable-systems techniques.
%

\section{Preliminaries}\label{sc:pre}
 In this section, we recall fundamental results and notions about 
 surfaces with Gaussian curvature $K>-1$ in $\mathbb H^3$. 
 We first introduce conformal coordinates with respect to 
 the second fundamental form of such surfaces. Moreover, we recall 
 the unit tangent 
 sphere bundle $\UH$ of $\mathbb H^3$ and the space of all oriented geodesics $\Geo$, 
 and introduce the Lagrangian and Legendrian Gauss maps.

\subsection{Surfaces in the hyperbolic three-space}
 Let us denote by $\mathbb{E}^{1,3}$ the \textit{Minkowski four-space}, 
 that is,  $\mathbb{E}^{1,3}=(\mathbb{R}^4(x_0,x_1,x_2,x_3),
 -\mathrm{d}x_0^2+\mathrm{d}x_1^2+\mathrm{d}x_2^2+\mathrm{d}x_3^2)$.
 The \textit{hyperbolic three-space} $\mathbb H^3$ 
 is then defined by a unit central hyperquadric in $\mathbb{E}^{1,3}$:
\begin{align*}
\mathbb{H}^{3}=\{(x_0,x_1,x_2,x_3)\in \mathbb{E}^{1,3}
\ \vert \ -x_0^2+x_1^2+x_2^2+x_3^2=-1,\ \ x_0>0\}.
\end{align*}
 It is known that $\mathbb{H}^3$ is a simply connected Riemannian three-manifold 
 of constant curvature $-1$.
 Denote by $\mathrm{SO}^{+}_{1,3}$ the identity component of the Lorentz group 
 $\mathrm{O}_{1,3}$. Then $\mathbb{H}^3$ is represented by 
 $\mathbb{H}^3=\mathrm{SO}^{+}_{1,3}/\mathrm{SO}_3$.

 Let $f:M\to \mathbb{H}^3$ be an immersion from a surface $M$ into $\mathbb{H}^3$
 with unit normal $n \subset \mathbb{E}^{1,3}$ and 
 let us take a local orthonormal frame field 
 $\{e_1, e_2,e_3=n\}$ of $f$.  Moreover take a dual coframe field 
 $\{\omega^1, \omega^2, \omega^3\}$ of  $\{ e_1,e_2,e_3=n\}$. 
 This coframe field satisfies
\eq{
\mathrm{d} \omega^{\alpha}
=\omega^{\beta}\wedge \omega_{\beta}^{\ \alpha},
}
 where $1\leq \alpha,\beta\leq 3$, see, e.g., \cite{Tenenblat}.
 Note that we have used the Einstein summation convention.
 Moreover, the structure equation is 
\begin{equation}\label{eq:structure}
\mathrm{d} \omega_{\alpha}^{\ \beta}=
\> \omega_{\alpha}^{\ \gamma}\wedge 
\omega_{\gamma}^{\ \beta}+\Omega_{\alpha}^{\ \beta},
\end{equation}
 where $1 \leq \alpha, \beta, \gamma \leq 3$ and 
 $\Omega_{\alpha}^{\ \beta}$ is the curvature $2$-form of $\mathbb{H}^{3}$ 
 defined by 
\eq{
 \Omega_{\alpha}^{\ \beta}= \omega^{\alpha}\wedge \omega^{\beta}.
 }
 On the surface $(M,f)$, we have 
 $\omega^{i}\wedge \omega_{i}^{\ 3}=0, \;(1 \leq i \leq  2)$.
 Cartan's lemma implies that $\omega_{i}^{\ 3}$ is expressed as
 $\omega_{i}^{\ 3}=b_{ij}\>\omega^{j},\ \  b_{ij}=b_{ji}, \;(1 \leq i, j \leq 2)$.
 One can check that the first and second fundamental forms
 $\mathrm{I}=\langle \mathrm{d}f ,\mathrm{d}f\rangle$ 
 and $\mathrm{I\!I}=-\langle \mathrm{d}f ,\mathrm{d}n\rangle$ 
 are respectively represented as 
\begin{align}\label{eq:firstsecond}
\mathrm{I}=\omega^{1}\otimes \omega^{1} + \omega^{2}\otimes \omega^{2}, \;\;
\mathrm{I\!I}=b_{ij}\>\omega^{i}\otimes \omega^{j},
\end{align}
 where $1\leq i, j \leq 2$.
 Then, by \eqref{eq:structure} and \eqref{eq:firstsecond},
 the Gaussian curvature $K$ and the mean curvature $H$ can be computed as
\begin{equation}\label{eq:Gauss}
K=-1+\det(b_{ij}), \quad H = \frac12(b_{11} + b_{22}).
\end{equation}
 The third fundamental form $\mathrm{I\!I\!I} = \langle \mathrm{d} n, 
 \mathrm{d} n \rangle$ can be computed as
\begin{equation}\label{eq:third}
 \mathrm{I\!I\!I} = \omega_1^3 \otimes \omega_1^3 + 
\omega_2^3 \otimes \omega_2^3,
\end{equation}
 and \eqref{eq:firstsecond}, \eqref{eq:Gauss} and \eqref{eq:third}
 imply a relation
\begin{equation}\label{eq:123relation}
\mathrm{I\!I\!I} = 2 H \mathrm{I\!I}  -(K+1) \mathrm{I}.
\end{equation}
 From now on we assume that the Gaussian curvature $K$ satisfies $K>-1$. 
 Then, from \eqref{eq:firstsecond} and \eqref{eq:Gauss}, it is easy to see that the second 
 fundamental form $\mathrm{I\!I}$ derived from $n$ is a Riemannian metric on $M$.
 We equip a conformal structure $[\mathrm{I\!I}]$ determined by $\mathrm{I\!I}$. 
 This conformal structure is called the \textit{second conformal structure} of $M$ with 
 respect to the orientation determined by $n$. 
\begin{remark}
 If the Gaussian curvature $K$ satisfies $K<-1$, then the second fundamental form 
 $\mathrm{I\!I}$ defines a Lorentzian metric on $M$ and hence induce a Lorentz conformal 
 structure $[\mathrm{I\!I}]$ on $M$.
\end{remark}
 Using a positive function $\sigma$,  
 let us represent the Gaussian curvature $K>-1$ by 
 \eq{
 K=-1+\sigma^2
 } 
 and take a 
 conformal coordinate $z=x+yi$ with respect to the second conformal structure 
 $[\mathrm{I\!I}]$. 
 One can then check that the quadratic differential 
 \eq{Q \>\dz^2 = \langle \p f,\p f\rangle\>\dz^2} 
 is well defined on the Riemann surface $(M, [\mathrm{I\!I}])$, see,  e.g., 
 \cite{Klotz}.  Here we use the standard notation
 for operators $\p$ and $\bp$:
\begin{equation*}
 \p = \frac{1}{2} \left(\frac{\partial }{\partial x} - i \frac{\partial }{\partial y}\right), \quad \bp = \frac{1}{2} \left(\frac{\partial }{\partial x} + i \frac{\partial }{\partial y}\right).
\end{equation*}
 The quadratic differential $Q \> \dz^2$ is called
 the \textit{Klotz differential} of $(M,f)$, and we prove later that 
 the constancy of the Gaussian curvature $K>-1$
 is characterized by 
 holomorphicity of the Klotz 
 differential, see Theorem \ref{thm:Ruh-Vilms}. 
 
 From the above argument, the first and second fundamental forms are given by
 \eq{\mathrm{I}=Q \>\dz^2 +2 \ell \> \dz \dzb+ \bar Q \>\dzb^2, \;\;\;\;
 \mathrm{I\!I}= 2 m \> \dz \dzb,}
 where $\ell >0$ and $m>0$ are some positive functions.
 From these expressions, it is easy to
 see that $p \in M$ is an umbilic point if and only if $Q (p) =0$.
 Then, from \eqref{eq:Gauss} and the Gaussian curvature 
 $K = -1 +\sigma^2$, the equation 
 $\sigma^{-2} m^2  =  \ell^2- |Q|^2 $ 
 holds. Let us introduce a real function $u$ by $\ell + \sigma^{-1} m = e^{u}$.
 Then, from the equation 
 $\ell -\sigma^{-1} m = |Q|^2 e^{-u}$, we have 
\begin{equation*}
 \ell = \frac{1}{2}(e^u + |Q|^2 e^{-u}), \quad
 m = \frac{\sigma}{2  } (e^u - |Q|^2e^{-u}).
\end{equation*}
 Note that since $m >0$, we have 
 a condition $e^{2u}> |Q|^2$. Therefore the first and second 
 fundamental forms are rephrased as
\begin{equation}\label{eq:firstsecond2}
\mathrm{I}=Q \> \dz^2+(e^u +|Q|^2e^{-u})\> \dz \dzb
+\bar Q\>\dzb^2, \;\;\;
\mathrm{I\!I}=\sigma (e^u -|Q|^2e^{-u})\>\dz \dzb.
\end{equation}
 Note that determinant of the coefficient matrix of $\mathrm{I}$ 
 and the mean curvature $H$ can be computed respectively 
 as $(e^u - |Q|^2 e^{-u})^2$ and 
\begin{equation}\label{eq:H}
  H = \frac{\sigma(e^{2u} +|Q|^2)}{2(e^{2u} -|Q|^2)}.
\end{equation}
 Using the relation \eqref{eq:123relation}, we compute 
 the third fundamental form $\mathrm{I\!I\!I} =
 \langle \mathrm{d} n,  \mathrm{d} n\rangle$ as
\begin{equation}\label{eq:third2}
\mathrm{I\!I\!I} =\sigma^2(-Q \> \dz^2+(e^u +|Q|^2e^{-u})\> \dz \dzb
-\bar Q\>\dzb^2).
\end{equation}

\subsection{The $2\times{2}$-matrix model of $\mathbb H^3$}
Let 
$\{\Vec{e}_0,\Vec{e}_1,\Vec{e}_2,\Vec{e}_3\}$
be the standard basis of $\mathbb{E}^{1,3}$.
We identify $\Vec{e}_0$, $\Vec{e}_1$, 
$\Vec{e}_2$ and $\Vec{e}_3$ with the following $2\times 2$ matrices:
\begin{equation}\label{eq:e0123}
\Vec{e}_{0}=\left(
\begin{array}{cc}
1 & 0\\
0 & 1
\end{array}
\right),
\ \
\Vec{e}_{1}=\left(
\begin{array}{cc}
1 & 0\\
0 & -1
\end{array}
\right),
\
\Vec{e}_{2}=\left(
\begin{array}{cc}
0 & -i\\
i & 0
\end{array}
\right)
\ \mbox{and} \
\Vec{e}_{3}=\left(
\begin{array}{cc}
0 & 1\\
1 & 0
\end{array}
\right).
\end{equation}
 The Minkowski four-space $\mathbb{E}^{1,3}$ is identified with the space
 of all complex Hermitian $2\times{2}$-matrices $\mathrm{Her}_{2}\mathbb{C}$, 
 that is, $\mathbb E^{1,3} \cong \mathrm{Her}_2\mathbb{C}
 =\left\{\xi_0 \Vec{e}_0+ \xi_1 \Vec{e}_1+ \xi_2 \Vec{e}_2+ \xi_3 \Vec{e}_3
\
\bigr \vert
\
\xi_0,
\xi_1,\xi_2,\xi_3
\in \mathbb{R}\
\right\}$.
 It is easy to see that  
 for $\xi\in \mathrm{Her}_2\mathbb{C}$, 
 $-\det
 \xi=-\xi_0^2+\xi_1^2+\xi_2^2+ \xi_3^2$. Thus the Lorentzian metric
 of $\mathrm{Her}_2\mathbb{C}$ is described as 
\eq{
\langle \xi, \eta\rangle=-\frac{1}{2} {\mathrm{tr}}(\xi \Vec{e}_2 {}^t\eta \Vec{e}_2).
}
 In particular, we have
 $\langle \xi, \xi\rangle = -\det \xi$ for $\xi \in \mathrm{Her}_2\mathbb{C}$.
 Note that $\{\Vec{e}_0, \Vec{e}_1, \Vec{e}_2, \Vec{e}_3\}$
 is an orthonormal basis of $\mathbb E^{1, 3}$.
 We then have the identification: 
 $\mathbb{H}^3=\{ \xi \in \mathrm{Her}_2\mathbb{C} \ \vert \ \det \xi=1,\
 \mathrm{tr}\: \xi>0\}$.
 Moreover, the special linear group $\mathrm{SL}_2\mathbb{C}$ acts isometrically and
 transitively on the hyperbolic three-space via the action:
$\mathrm{SL}_2\mathbb{C}\times \mathbb{H}^3\to 
\mathbb{H}^3$ by $(g,\xi)\longmapsto g\> \xi \>g^{*}$.
 Here the superscript $*$ denotes 
 the Hermitian conjugate, that is,
 $g^* = {}^t \bar g$ for a matrix $g$.
 The isotropy subgroup of this action at $\Vec{e}_{0}=(1,0,0,0) \cong
 \mathrm{id}$ is the special unitary group $\mathrm{SU}_2$. Hence
 $\mathbb{H}^3$ is represented by
 $\mathbb{H}^3=\mathrm{SL}_2\mathbb{C}/\mathrm{SU}_2$ as a 
 \textit{Riemannian symmetric space}. 
 The natural projection $\pi:\mathrm{SL}_2\mathbb{C}\to
 \mathbb{H}^3$ is  explicitly given by $\pi(g)=gg^{*},\ g\in \mathrm{SL}_2\mathbb{C}$. In other
 words, $\mathbb{H}^3$ is represented as
$$
\mathbb{H}^{3}=\{gg^{*}\
\vert
\
g \in \mathrm{SL}_2\mathbb{C}\}.
$$
\begin{remark}\label{rm:Killingmetric}
 In this context, the Lie group
 $\mathrm{SL}_2\mathbb{C}$ is regarded as a simple real Lie group 
 and as a double covering of the special Lorentz group
 $\mathrm{SO}^{+}_{1,3}$. 
 The real Lie algebra $\mathfrak{sl}_2\mathbb{C}$ of $\mathrm{SL}_2\mathbb{C}$ 
 is spanned by the basis 
 $\mathcal{E}=\{i\Vec{e}_{1},i\Vec{e}_{2},i\Vec{e}_{3}, \Vec{e}_{1},\Vec{e}_{2},
 \Vec{e}_{3}\}$.
 With respect to the scalar product \eq{
 \langle X,Y\rangle_{K}=\frac{1}{2} \>\mathrm{tr}\>(XY),  \
 \;\;\;\left(X,Y \in \mathfrak{sl}_{2}\mathbb{C}\right),}
 $\mathcal{E}$ has signature $(-,-,-,+,+,+)$ and induces a bi-invariant 
 semi-Riemannian metric on $\mathrm{SL}_2\mathbb{C}$.
 The Lie algebras $\mathfrak{u}_1$ and $\mathfrak{su}_2$ 
are expressed as 
$\mathfrak{u}_1=\mathbb{R}(i\Vec{e}_1)$ and 
 $\mathfrak{su}_2=
\mathbb{R}(i\Vec{e}_1)\oplus\mathbb{R}(i\Vec{e}_2)
\oplus\mathbb{R}(i\Vec{e}_3)$,
respectively. Thus $\mathfrak{su}_2$ is negative definite 
linear subspace of $\mathfrak{sl}_2\mathbb{C}$.

The tangent space
 $\mathfrak{m}=T_{\Vec{e}_0}\mathbb{H}^3$ is given by
$$
\mathfrak{m}=\mathfrak{sl}_2\mathbb{C}
\cap \mathrm{Her}_{2}\mathbb{C}=
\mathbb{R}\Vec{e}_{1}\oplus \mathbb{R}\Vec{e}_{2}
\oplus \mathbb{R}\Vec{e}_3.
$$
One can see that the restriction of the scalar product 
$\langle\cdot,\cdot\rangle_K$ to $\mathfrak{m}$ coincides with the Lorentz scalar 
product on $\mathfrak{m}\subset \mathrm{Her}_2\mathbb{C}$. The induced Riemannian metric 
on $\mathbb{H}^3=\mathrm{SL}_2\mathbb{C}/\mathrm{SU}_2$ from 
$\langle\cdot,\cdot\rangle_K$ is of constant curvature $-1$.
\end{remark}
\subsection{Unit tangent sphere bundle}\label{sc:tangentspherebundle}
 The unit tangent sphere bundle $\mathrm{U}\mathbb{H}^3$ of $\h$ 
 is represented as,
\eq{
\mathrm{U}\mathbb{H}^{3}=
\{ (\Vec{x},\Vec{v})\in \mathrm{Her}_2\mathbb{C}\times 
\mathrm{Her}_2\mathbb{C}
\
\vert
\
\det \Vec{x}= -\det \Vec{v}=1, \mathrm{tr}\>\Vec{x}>0, \> \langle \Vec{x},\Vec{v}\rangle =0\},
}
 see \cite{DIK1} in detail.
 The special linear group $\mathrm{SL}_2\mathbb{C}$ acts
 isometrically and transitively on $\mathrm{U}\mathbb{H}^{3}$ via the
 action $g\cdot (\Vec{x},\Vec{v})=(g \Vec{x} g^{*}, g \Vec{v} g^{*})$.
 The isotropy subgroup of $\mathrm{SL}_2\mathbb{C}$ at $(\Vec{e}_{0},\Vec{e}_{1})$ is 
 \eq{H=\left\{
 \diag (e^{i\theta}, e^{-i\theta}) \mid \theta \in \R \right\}=\mathrm{U}_1.
}
 Thus the unit tangent sphere bundle $\mathrm{U}\mathbb H^3$ 
 is represented by $\mathrm{SL}_2\mathbb{C}/\mathrm{U}_1$ as a 
 homogeneous manifold.
 The Lie algebra $\mathfrak{h}$ of $H$ is
 $\mathfrak{h}=\left\{
 \diag (i a_2, -i a_2) \mid  a_2 \in \R\right\}$.
 The tangent space $\mathfrak{p}:=T_{(\Vec{e}_0,\Vec{e}_1)}
 \mathrm{U}\mathbb{H}^3$ is given by
\begin{equation}\label{eq:p}
\mathfrak{p}=
\left\{
\left(
\begin{array}{cc}
a_{1} & b_{1}+ib_{2}\\
c_{1}+ic_{2} & -a_{1}
\end{array}
\right)
\
\biggr
\vert
\
a_{1},b_{1},b_{2},
c_{1},c_{2}
\in \mathbb{R}
\>
\right\}.
\end{equation}
 Note that
\eq{
\mathfrak{p}=
\{\mathbb{R}\Vec{e}_1\oplus
\mathbb{R}\Vec{e}_2
\oplus
\mathbb{R}\Vec{e}_3\}
\oplus
\{
\mathbb{R}(i\Vec{e}_2)\oplus
\mathbb{R}(i\Vec{e}_3)
\}
= \mathfrak{m}\oplus 
\{
\mathbb{R}(i\Vec{e}_2)\oplus
\mathbb{R}(i\Vec{e}_3)
\}.
}
 The scalar product $\langle\cdot,\cdot\rangle_K$ induces a 
 $\mathrm{SL}_2\mathbb{C}$-invariant semi-Riemannian metric 
 on $\mathrm{U}\mathbb{H}^3$.  The resulting 
homogeneous semi-Riemannian space $G/H=\mathrm{SL}_2\mathbb{C}
/\Uone$ is naturally reductive. 
 The unit tangent sphere bundle admits two kinds of $\mathrm{CR}$-structures 
 $J_1$ and $J_2$. The resulting $\mathrm{CR}$-manifold $(\mathrm{U}\mathbb{H}^3,J_1)$
 is the \textit{twistor $\mathrm{CR}$-manifold} of $\mathbb{H}^3$ in the sense of LeBrun 
 \cite{LeBrun}.

\subsection{Natural projections and two Gauss maps}
 We have several natural projections from $\UH$. 
 To define the projections, we introduce the notion of 
 the space $\Geo$ of all oriented 
 geodesics in $\mathbb{H}^3$ which is identified with the Grassmannian 
 manifold $\Gr$ of all oriented timelike planes 
 in $\mathbb{E}^{1,3}$.
 Then, $\mathrm{U}\mathbb{H}^3$ is a principal line bundle
 over $\Gr$. 
 The natural projection $\pi_0:\mathrm{U}\mathbb{H}^3\to 
 \Geo$ is given explicitly by
 $\pi_0(\Vec{x},\Vec{v})=\Vec{x}\wedge \Vec{v}$.   
 The space $\Gr$ is a homogeneous space of 
 $G=\mathrm{SL}_2\mathbb{C}$, and  the isotropy subgroup $K$ 
 at $\Vec{e}_0\wedge\Vec{e}_1$ is 
\[
K=
\mathrm{GL}_1\mathbb{C}
=\{\mathrm{diag}(a,a^{-1})\>|\>a\in\mathbb{C}^{\times}\,\}
\cong \mathrm{GL}_1\mathbb{C}.
\] 
The Lie algebra $\mathfrak{k}$ of $K$ is
 $\mathfrak{k}=\left\{
 \diag (i a_1, - a_1) \>|\> a_1 \in \C \right\}$, and thus 
 the tangent space of $\Gr$ at $\Vec{e}_0\wedge\Vec{e}_1$ is 
identified with
\begin{equation}\label{eq:q}
\mathfrak{q}=
\left\{
\left(
\begin{array}{cc}
0 & b_{1}+ib_{2}\\
c_{1}+ic_{2} & 0
\end{array}
\right)
\
\biggr
\vert
\
b_{1},b_{2},
c_{1},c_{2}
\in \mathbb{R}
\>
\right\}.
\end{equation}
One can see that 
$G/K=\mathrm{SL}_{2}\mathbb{C}/\mathrm{GL}_1\mathbb{C}$ is a 
semi-Riemannian symmetric space with respect to the metric 
induced by $\langle\cdot,\cdot\rangle$. For more detail, see \cite{DIK1}. 

 Next, for $(\Vec{x}, \Vec{v}) \in \UH$, 
 it is easy to see that $\Vec{v}$ takes values in the \textit{de Sitter three-space}
 \eq{
 \mathbb S^{1,2} =\{(x_0,x_1,x_2,x_3)\in \mathbb{E}^{1,3}
\mid -x_0^2+x_1^2+x_2^2+x_3^2=1\},}
 which is a simply connected Lorentzian 
 three-manifold of constant sectional curvature $1$. Thus there are two 
 natural projections \[
\mbox{$\pi_+ : \UH \to \mathbb H^3$ by
 $\pi_+ (\Vec{x}, \Vec{v}) = \Vec{x}$, and  $\pi_- : \UH \to \mathbb S^{1,2}$
by $\pi_- (\Vec{x}, \Vec{v}) = \Vec{v}$},
\]
 respectively. Finally, there is also a natural projection from 
\[
 \mbox{$\pi_* : \SL \to \UH$ by 
 $\pi_*(g) = (gg^*, g\Vec{e}_1g^*)$. }
\]
 Note that the projection $\pi : \SL \to \h=\SL/\SU$
  is  given by the composition $\pi = \pi_+ \circ\pi_*$.
%

 Using these projections, we recall two Gauss maps for a surface in $\Ht^3$, which 
 are important in this paper. Note that several natural Gauss maps were already 
 defined; see \cite[Appendix of the archive version]{DIK1}.
\begin{definition}
 Let $f:M\to \mathbb{H}^3$ be a surface with unit normal $n$, and let 
 $\mathrm{Gr}_{1,1}(\mathbb{E}^{1,3}) = \Geo$.
 The {\rm Legendrian Gauss map} $F$ and the {\rm Lagrangian Gauss map} $L$ of $f$ are 
 respectively defined by 
 \eq{
F=(f,n):M\to \mathrm{U}\mathbb{H}^3 \quad\mbox{and}\quad
L=f\wedge n:M\to \mathrm{Gr}_{1,1}(\mathbb{E}^{1,3}).
}
\end{definition}
\begin{remark}
 Let us denote by $\eta$ and $\Omega$ the canonical contact 
 form of $\mathrm{U}\mathbb{H}^3$ and 
 the canonical symplectic form of $\mathrm{Gr}_{1,1}(\mathbb{E}^{1,3})$. 
 One can then see that $F$ is Legendre, that is, $F^{*}\eta=0$ and 
 $L$ is Lagrangian, that is, $L^{*}\Omega=0$, respectively, see 
 \cite{DIK1}.  In \cite{DIK1}, using the conformal structure induced from the first 
 fundamental form, we characterized 
 constant mean curvature surfaces by harmonicity of the Legendrian Gauss map.
\end{remark}

\section{Harmonic Gauss maps}\label{sc:harmonicGauss}
 In this section, we characterize constant Gaussian curvature $K>-1$ surfaces
 in terms of harmonicity of Legendrian and Lagrangian Gauss maps, respectively. 
 These harmonic Gauss maps naturally induce a one-parameter family, 
 the so-called \textit{spectral parameter family},  
 of  constant Gaussian curvature $K>-1$ surfaces. 
 Finally a deformation of the spectral parameter induces harmonic maps into 
 $\mathbb S^2$ in case of $K>0$ or $\mathbb H^2$ in case of  $-1< K < 0$, 
 respectively.
\subsection{Lax representation}
 Let $f:M\to \mathbb{H}^3$ be a surface wtih Gaussian curvature $K=-1+\sigma^2$
 for some positive function $\sigma$. 
 Let $D \subset M$ 
 be a simply connected domain and 
 $z= x + y i$ on $D$ a conformal coordinate on $D$ with respect to ${[\rm I\!I]}$. 
 We define two matrices as
\begin{equation}\label{eq:Vece}
 \Vec{\hat e}_2 = - \frac{1}{2}\left(\Vec{e}_2 -i \Vec{e}_3\right) = 
 \begin{pmatrix}
 0 & i \\ 0 & 0 
 \end{pmatrix}, \quad 
 \Vec{\hat e}_3 = - \frac{1}{2}\left(\Vec{e}_2 +i \Vec{e}_3\right)
 =  \begin{pmatrix}
 0 & 0 \\ -i  & 0 
 \end{pmatrix}. 
\end{equation}
 Note that $\Vec{\hat e}^*_3 = \Vec{\hat e}_2$ and 
 $\langle \Vec{\hat e}_i , \Vec{\hat e}_i \rangle = 0 \; (i = 2, 3)$
 and $ \langle \Vec{\hat e}_2 , \Vec{\hat e}_3 \rangle = 1/2 $, and 
 thus $\{\Vec{\hat e}_2, \Vec{\hat e}_3\}$ forms null basis. 
 Let us consider a change of basis $\{\Vec{e}, \Vec{e}^*\}$ 
 defined by 
\eq{
(\Vec{e}, \Vec{e}^*) = (\Vec{\hat e}_2, \Vec{\hat e}_3) 
 \begin{pmatrix} e^{u/2} & \bar Q e^{-u/2}\\ Q e^{-u/2}	& e^{u/2} \end{pmatrix}.
}
 It is easy to see that $\langle \Vec{e}, \Vec{e}\rangle =  Q$, 
 $\langle \Vec{e}^*, \Vec{e}^*\rangle =  \bar Q$ and 
  $\langle \Vec{e}, \Vec{e}^*\rangle =  (e^u + |Q|^2 e^{-u})/2$. 
 Since $\{\Vec{e}_0, \Vec{e}_1, \Vec{e}, \Vec{e}^*\}$ 
 is a basis in $\mathbb E^{1, 3}$, there exists 
 a map $\Psi:D\subset  M\to \mathrm{SL}_2\mathbb{C}$ 
 satisfying the following equations:
\eq{
f=\Psi \Vec{e}_0\Psi^{*}, \;
n=\Psi \Vec{e}_1\Psi^{*}, \; 
\p f=\Psi \Vec{e}\Psi^{*}\;\;\mbox{and}\;\;
\bp f =\Psi \Vec{e}^* \Psi^{*}.
}
 It is clear that $\Psi$ is a moving frame along the surface $f$. 
The moving frame $\Psi$ is called the \textit{coordinate frame}.
 We obtain the following \textit{Lax representation} for $f$.
\begin{proposition}\label{prop:UV}
 The coordinate frame $\Psi$ satisfies the Lax equation:
\begin{equation}\label{eq:Lax}
 \partial \Psi = \Psi U, \quad
 \bar \partial \Psi = \Psi V, 
\end{equation}
 where $U$ and $V$ are given by 
\begin{align}
 \label{eq:U1}
U&=\Big(\frac{1}{4}\p u+ p\Big) \Vec{e}_1 + \frac{1 + \sigma}{2} 
 (e^{u/2} \Vec{\hat e}_2 + Qe^{- u/2} \Vec{\hat e}_3 ),  \\
\label{eq:V1}
V &=-\Big(\frac{1}{4}\bp u+\bar p\Big) \Vec{e}_1
+\frac{1 - \sigma}{2}(\bar Qe^{- u/2} \Vec{\hat e}_2 + e^{u/2} \Vec{\hat e}_3 ),
\end{align}
 and $\Vec{e}_1$, $ \Vec{ \hat e}_2$ and $\Vec{\hat e}_3$ are defined in \eqref{eq:e0123} and \eqref{eq:Vece},
 and the function $p$ is given by $p=\frac12(- \bp Q + e^{-u} Q \p \bar Q)(e^u - |Q|^2e^{-u})^{-1}$.
 The compatibility conditions, that is, the Gauss-Codazzi equations for a surface 
 $f$,  are given as follows$:$
\begin{align}\label{eq:Gausseq}
\bp \p u
+\frac{K}{2}(e^u -|Q|^2 e^{-u})+ 2(\p \bar p+ \bp p)=0,  \\
2 \bp ( Q \sigma) + (e^u +|Q|^2e^{-u})\p \sigma=0.
\label{eq:Codazzieq}
\end{align}
 Conversely, let $u$ be a real smooth function and $Q$ a complex valued smooth function and $K = -1 + \sigma^2$ with some positive function 
 $\sigma$ be solutions
 of the Gauss-Codazzi equations \eqref{eq:Gausseq}and \eqref{eq:Codazzieq}
 such that $e^{2u} - |Q|^2>0$ holds, 
 and let $\Psi$
 be a solution of the Lax equation \eqref{eq:Lax}. Define
 respective maps
\eq{
 f = \Psi \Vec{e}_0\Psi^* \quad \mbox{and} \quad n= \Psi \Vec{e}_1\Psi^*.
}
 Then, $f$ is a surface in $\h$ with unit normal $n$ and the Gaussian curvature
 of $f$ is  $K = -1 + \sigma^2> -1$. 
\end{proposition}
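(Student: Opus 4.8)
The plan is to establish the direct statement first (deriving the Lax pair and Gauss–Codazzi equations from a given surface), and then the converse. For the direct part, I would start from the coordinate frame $\Psi$ and differentiate the defining relations $f=\Psi \Vec{e}_0\Psi^*$, $n=\Psi\Vec{e}_1\Psi^*$, $\p f=\Psi\Vec{e}\Psi^*$, $\bp f=\Psi\Vec{e}^*\Psi^*$. Writing $\p\Psi=\Psi U$ and $\bp\Psi=\Psi V$ with $U,V\in\slt$, these relations become a linear-algebraic system: for instance $\p f=\Psi\Vec{e}\Psi^*$ forces $U\Vec{e}_0+\Vec{e}_0 U^*=\Vec{e}$, and similarly the other three relations pin down the remaining components of $U$ and $V$. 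The key input is the expressions \eqref{eq:firstsecond2} and \eqref{eq:third2} for the fundamental forms, which fix the inner products $\langle\Vec{e},\Vec{e}\rangle=Q$, $\langle\Vec{e},\Vec{e}^*\rangle=(e^u+|Q|^2e^{-u})/2$, together with the relation \eqref{eq:123relation} tying $\mathrm{I\!I\!I}$ to $\mathrm{I}$ and $\mathrm{I\!I}$. I expect the $\Vec{e}_1$-coefficient of $U$ to come from the Levi-Civita connection term and to take the form $\tfrac14\p u+p$, where the correction $p$ is forced precisely by the mixed derivative condition and produces the stated formula once one solves for it using $m=\tfrac{\sigma}{2}(e^u-|Q|^2e^{-u})$.

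Next I would compute the compatibility condition $\bp U-\p V+[U,V]=0$, which is equivalent to $\bp\p\Psi=\p\bp\Psi$ and hence necessary and sufficient for the existence of $\Psi$. Decomposing this Maurer–Cartan equation along the basis $\{\Vec{e}_1,\Vec{\hat e}_2,\Vec{\hat e}_3\}$ should split it into the diagonal ($\Vec{e}_1$) part and the off-diagonal ($\Vec{\hat e}_2,\Vec{\hat e}_3$) parts. The diagonal part will reproduce the Gauss equation \eqref{eq:Gausseq}, since that is where the curvature $K=-1+\sigma^2$ enters through the bracket $[\Vec{\hat e}_2,\Vec{\hat e}_3]\sim\Vec{e}_1$; the off-diagonal part will reproduce the Codazzi equation \eqref{eq:Codazzieq}. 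This is essentially a bookkeeping computation using the bracket relations among $\Vec{e}_1,\Vec{\hat e}_2,\Vec{\hat e}_3$, and I would organize it by first recording those structure constants explicitly.

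For the converse, I would argue that given solutions $u,Q,\sigma$ of \eqref{eq:Gausseq}–\eqref{eq:Codazzieq} with $e^{2u}-|Q|^2>0$, the matrices $U,V$ built from \eqref{eq:U1}–\eqref{eq:V1} satisfy the Maurer–Cartan equation, so on the simply connected domain $D$ there exists $\Psi:D\to\SL$ with $\p\Psi=\Psi U$, $\bp\Psi=\Psi V$, unique up to left multiplication by a constant. Defining $f=\Psi\Vec{e}_0\Psi^*$ and $n=\Psi\Vec{e}_1\Psi^*$, I would verify that $\langle f,f\rangle=1$ and $\mathrm{tr}\,f>0$ so $f$ maps into $\h$, that $n$ is a unit normal ($\langle n,n\rangle=1$, $\langle f,n\rangle=0$, $\langle\p f,n\rangle=0$), and that the induced first and second fundamental forms recover \eqref{eq:firstsecond2}; the positivity $e^{2u}-|Q|^2>0$ guarantees that $f$ is a genuine immersion with $\mathrm{I\!I}$ nondegenerate. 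The Gaussian curvature of $f$ then equals $-1+\det(b_{ij})=-1+\sigma^2$ by \eqref{eq:Gauss}, completing the converse.

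The main obstacle I anticipate is the determination of the connection term $p$ and the verification of the Codazzi equation \eqref{eq:Codazzieq}. Unlike the diagonal Gauss equation, the off-diagonal compatibility mixes $\p\sigma$, $\bp Q$, and the $e^{-u}Q\p\bar Q$ term in a way that only closes after substituting the explicit formula for $p$, so I would need to track these terms carefully and use the constraint relating $m$, $\sigma$, and $e^u-|Q|^2e^{-u}$ to get the coefficients to match. The rest is largely systematic once the bracket relations and the inner-product normalizations of $\{\Vec{e}_0,\Vec{e}_1,\Vec{e},\Vec{e}^*\}$ are in hand.
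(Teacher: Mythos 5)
Your proposal follows essentially the same route as the paper: differentiate the defining relations of the coordinate frame to get a linear-algebraic system for $U,V$, use the fundamental-form normalizations to fix the off-diagonal entries, use the mixed-derivative condition $\bp\p f=\p\bp f$ to force the connection term $p$, then read the Gauss equation off the diagonal ($\Vec{e}_1$) part and the Codazzi equation off the off-diagonal part of the zero-curvature equation, with the converse handled by the standard integration of the Maurer--Cartan form on a simply connected domain.

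Two slips would derail the computation if carried through literally, so correct them before executing the plan. First, since $\p(\Psi^*)=(\bp\Psi)^*=V^*\Psi^*$ (Hermitian conjugation interchanges $\p$ and $\bp$), differentiating $f=\Psi\Vec{e}_0\Psi^*$ gives $U+V^*=\Vec{e}$, \emph{not} $U\Vec{e}_0+\Vec{e}_0U^*=\Vec{e}$: the latter has a Hermitian left-hand side while $\Vec{e}=e^{u/2}\Vec{\hat e}_2+Qe^{-u/2}\Vec{\hat e}_3$ is not Hermitian, so that equation is inconsistent; the correct version couples $U$ and $V$, which is precisely what the paper exploits in its relations \eqref{eq:fz}. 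Second, with the right-multiplication convention $\p\Psi=\Psi U$, $\bp\Psi=\Psi V$ used in the statement, the compatibility condition is $\bp U-\p V+[V,U]=0$ (the paper's $\mathcal L=0$), not $\bp U-\p V+[U,V]=0$; your sign belongs to the left-multiplication convention $\p\Psi=U\Psi$. With these two corrections, your outline reproduces the paper's proof, including the identification $[\Vec{\hat e}_2,\Vec{\hat e}_3]=\Vec{e}_1$ that feeds the curvature term into the diagonal part.
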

\begin{proof}
 Differentiating $f = \Psi \Vec{e}_0 \Psi^*$ by $z$, 
 we have $\p f=\Psi(U+V^{*})\Psi^{*}$. Comparing it with the 
 formula $\p f=\Psi \left(e^{u/2} \Vec{\hat e}_2 + Qe^{- u/2} \Vec{\hat e}_3 \right) \Psi^*$, we obtain
\begin{equation}\label{eq:fz}
v_{11}=-\overline{u_{11}},
\ \ 
v_{22}=-\overline{u_{22}},
\ \ 
u_{12}+\overline{v_{21}}=i e^{u/2},
\ \ 
u_{21}+\overline{v_{12}}=-i Q e^{-u/2},
\end{equation}
 where we set $U =(u_{ij})_{1 \leq i, j \leq 2}$ and $V = (v_{ij})_{1 \leq i, j \leq 2}$. 
 Note that since $\Psi$ takes values in $\SL$, 
 we have $u_{11}+u_{22}=0= v_{11}+v_{22}$.
 Since $\langle \p f, \p n\rangle=0$ and $\p n =\Psi ( U \Vec{e}_1 + \Vec{e}_1 V^*) \Psi^*$, we have
\eq{
 \mathrm{tr}\{(e^{u/2} \Vec{\hat e}_2 + Qe^{- u/2} \Vec{\hat e}_3 )\cdot\Vec{e}_2{}^t(U\Vec{e}_1+\Vec{e}_1V^*)\Vec{e}_2\}=0,} 
 thus 
 $e^{u/2}(u_{21}-\overline{v_{12}})+Q e^{-u/2}(u_{12}-\overline{v_{21}})=0$.
 Using \eqref{eq:fz}, this is rephrased as
\begin{equation}\label{eq:u21}
 u_{21}+ Qe^{-u}u_{12}=0.
\end{equation}
 Next, by the second fundamental form $\mathrm{I\!I}$ in \eqref{eq:firstsecond} 
 we have $-\langle \bp f, \p n\rangle = \sigma(e^u -|Q|^2e^{-u})/2$.
 A direct computation similar to the above shows that this is equivalent with 
 $\sigma(e^u -|Q|^2e^{-u})/2 =
 i (\bar Q e^{-u/2}u_{21} + e^{u/2}u_{12}) - (e^u -|Q|^2 e^{-u})/2$.
 Comparing this with \eqref{eq:u21} and \eqref{eq:fz}, we obtain
\eq{
&u_{12}= \frac{i}{2}(1+ \sigma)e^{u/2}, 
\;\;\;u_{21}= -\frac{i}{2}(1+ \sigma)Q e^{-u/2},  \\
&v_{12}= \frac{i}{2}(1-\sigma)\bar Q e^{-u/2},\;\;\;
v_{21}= -\frac{i}{2}(1-\sigma)e^{u/2}.
}
 To obtain the diagonal part of $U$ and $V$, we use the compatibility condition
 $\bp \p f = \p \bp f$. A direct computation together with the relation 
 in \eqref{eq:fz} shows that this is equivalent to
\begin{align*}
-2 \overline{u_{11}} + \frac{\bp u}2 = 2 \bar Q e^{-u} u_{11} 
 + e^{-u} \Big(\bar Q_z - \frac{\p u}2 \bar Q\Big), \\
 -2 Q \overline{u_{11}} - \bp Q + \frac{\bp u}2 Q = \Big(2 u_{11}-\frac{\p u}2\Big)e^u. 
\end{align*} 
 From these equations, we obtain
\begin{equation}\label{eq:up}
 u_{11}= - \overline{{v}_{11}} = -\frac{\p u}{4}+p,
\ \ 
p=\frac{- \bp Q + e^{-u} Q \p \bar Q}
{2(e^u - |Q|^2e^{-u})}.
\end{equation}
 Now the compatibility conditions, that is $\bp\p\Psi = \p\bp\Psi$,
 are given by a straightforward computation: First note that 
 $\bp \p\Psi = \p\bp\Psi$ is equivalent to the zero curvature equation 
 $\mathcal{L}: = \bp U -\p V + [V, U] =0$.
 Then, $(1, 1)$- and
 $(2, 2)$-entries of $\mathcal{L}$ give the equation \eqref{eq:Gausseq}, that is, 
 the Gauss equation. Moreover, $(1, 2)$- and $(2, 1)$-entries of $\mathcal{L}=0$ can be computed as
\begin{align}
 i (\bp X- 2 \bar p X)e^{u/2} -i ( \bar Q \p Y+ Y \p \bar Q + 2 Y \bar Q p)e^{-u/2}=0,
\label{eq:1-2ent} \\
 i (- Q \bp X- X \bp Q- 2 X Q \bar p)e^{-u/2} -i (-\p Y+ 2 Y p)e^{u/2}=0,
\label{eq:2-1ent}
\end{align}
 where $X = (1+ \sigma)/2$ and 
 $Y = (1- \sigma)/2$.
 From these equations and the expression $p$ in \eqref{eq:up}, we have 
 the equation \eqref{eq:Codazzieq}, that is, the Codazzi equation.

 The converse statement is almost clear by a direct computation for 
 $f = \Psi \Vec{e}_0 \Psi^*$ with $n= \Psi \Vec{e}_1\Psi^*$. 
 The first fundamental form of $f$ is non-degenerate by the assumption, 
 that is, $f$ gives a surface. Then, $n$ is clearly the unit normal of $f$.
\end{proof}

\subsection{Harmonicity of Gauss maps}\label{sbsc:Gauss}
 We now characterize
 a constant Gaussian curvature surface with $K>-1$ in $\h$ 
 in terms of various Gauss maps as stated in Theorem \ref{thm:Ruh-Vilms}.
\begin{proof}[Proof of {\rm Theorem \ref{thm:Ruh-Vilms}}]
$(1) \Rightarrow (4)$: This is clear from the Codazzi equation, that is, 
 the equation \eqref{eq:Codazzieq}.
$(4) \Rightarrow (1)$: Conversely assume that $Q\> \dz^2$ is holomorphic. 
 Thus note that zeros of $Q$ are isolated. On a region such that $Q\neq 0$, 
 we consider the Codazzi equation \eqref{eq:Codazzieq}
 and its conjugation with multiplying $-(e^u+|Q|^2 e^{-u})/2$. 
 Adding it to the Codazzi equation multiplying $\bar Q$, 
 we obtain $-(\bp \sigma) (e^u-|Q|^2e^{-u})^2/2 =0$. 
 Since $\langle \bp f, \p n \rangle = \sigma (e^u-|Q|^2e^{-u})/2>0$, 
 this equation holds if and only if $\sigma$ is constant, thus the surface 
 is a constant Gaussian curvature surface with $K>-1$.
 We then use the argument of continuity and it implies that the surface is 
 a constant Gaussian curvature surface $K>-1$ on $M$.
%
%
%
%

 We now show that $(1) \Leftrightarrow (2)$.
 We use the harmonicity criterion \eqref{eq:harmonicity}
 for $F:M\to G/H=\mathrm{SL}_2\mathbb{C}/\mathrm{U}_1$ 
 with reductive decomposition 
 $\mathfrak{sl}_2\mathbb{C}=\mathfrak{u}_1\oplus\mathfrak{p}$, where 
 $\mathfrak{p}$ is given by \eqref{eq:p}.
  From Proposition \ref{prop:UV}, it is easy to see that $\alpha=\Psi^{-1}\mathrm{d}\Psi=
\alpha_{\mathfrak h} +\alpha_{\mathfrak p}^{\prime}+\alpha_{\mathfrak p}^{\prime\prime}$
 is given by 
\eq{\alpha_{\mathfrak h} = \Big(\frac{\p u}{4} + p\Big)\Vec{e}_1 \dz  
- \Big(\frac{\bp u}{4} + \bar p\Big)\Vec{e}_1 \dzb, \;\;
 \alpha_{\mathfrak p}^{\prime} =
 X \Vec{e} \; \dz 
\;\;\mbox{and } \;\;
\alpha_{\mathfrak p}^{\prime \prime}  =
 Y \Vec{e}^* \; \dzb,
}
%
%
%
 where $\Vec{e}$ is defined in \eqref{eq:Vece} and
 $X = (1+\sigma)/2$ and $Y =  (1-\sigma)/2$. 
 By a direct computation, we have 
$[\alpha^{\prime}_{\mathfrak p}\wedge 
\alpha^{\prime\prime}_{\mathfrak p}]=
 XY(e^u -|Q|^2e^{-u})\>\Vec{e}_1\>
\dz \wedge \dzb$.
From this it is clear that $[\alpha^{\prime}_{\mathfrak p}\wedge 
\alpha^{\prime\prime}_{\mathfrak p}]_{\mathfrak p}=0$.
 Next, we compute $ \mathrm{d}(*\alpha_\mathfrak{p})
+[\alpha\wedge *\alpha_{\mathfrak p}]$.
 It is easy to see that the $(1, 1)$- and $(2, 2)$-entries are zero.
 Then, the $(1, 2)$- and $(2, 1)$-entries can be computed as
\eq{&
\left\{ (2 \bar p  X - \bp X)e^{u/2}-(2 p \bar Q Y + \p (Y\bar Q)) e^{-u/2}\right\}
 \dz \wedge \dzb, \\
&\left\{ (\bp (X Q) + 2 \bar p Q X)e^{-u/2}+(\p Y-  2 p Y) e^{u/2}\right\} \dz \wedge \dzb.
}
 Using the equations \eqref{eq:1-2ent} and \eqref{eq:2-1ent}, $ \mathrm{d}(*\alpha_\mathfrak{p})
+[\alpha\wedge *\alpha_{\mathfrak p}] = 0$ is equivalent with 
\eq{
\bp X - 2 \bar p X = \p Y + 2p Y =0.
}
 These equations hold if and only if $p=0$ and 
 this happens if and only if $\bp Q =0$, that is, the Klotz differential 
 is holomorphic. Since $(1)$ is equivalent with $(4)$, the claim follows.
 
 Finally we show $(1) \Leftrightarrow (3)$. 
 Let us consider the harmonicity of $L:M\to 
 G/K=\mathrm{SL}_2\mathbb{C}/\mathrm{GL}_1\mathbb{C}$ 
 equipped with the reductive decomposition 
 $\mathfrak{sl}_2\mathbb{C}=\mathfrak{k}\oplus\mathfrak{q}$, where 
 $\mathfrak{q}$ is given by \eqref{eq:q}. Since $\Geo$ is a symmetric space, 
 the first condition in \eqref{eq:harmonicity}, that is, 
 $[\alpha^{\prime}_{\mathfrak q}\wedge \alpha^{\prime\prime}_
 {\mathfrak q}]_{\mathfrak q}=0$ is vacuous. 
 The harmonicity of $L$ is exactly the same as the case $(2)$ and the claim follows.
 This completes the proof.
\end{proof} 
\begin{corollary}\label{coro:flatconnections}
 Let $U^{\l}$ and $V^{\l}$ be a $\mathbb S^1$-family of matrices 
 parameterized by $\lambda \in \mathbb S^1$$:$ 
\begin{align}\label{eq:UVlambda1} 
U^{\l}&=\Big(\frac{1}{4}\p u+ p\Big) \Vec{e}_1 + \frac{1 + \sigma}{2} 
 \l^{-1}(e^{u/2} \Vec{\hat e}_2 + Qe^{- u/2} \Vec{\hat e}_3),  \\
\label{eq:UVlambda2} 
V^{\l} &=-\Big(\frac{1}{4}\bp u+\bar p\Big) \Vec{e}_1
 + \frac{1 - \sigma}{2}\l (\bar Qe^{- u/2} \Vec{\hat e}_2 + e^{u/2} \Vec{\hat e}_3).
\end{align}
%
 Moreover, set a $\mathbb S^1$-family of Maurer-Cartan forms as
\begin{equation*}
 \alpha^{\l} = U^{\l} \mathrm{d}z+V^{\l} \mathrm{d}\bar{z}.
\end{equation*}
 Any statement in {\rm Theorem \ref{thm:Ruh-Vilms}} is then equivalent with the following
 statement$:$
 \begin{enumerate}
 \item[(5)] $\mathrm{d} + \alpha^{\l}$ is a family of flat connections on $M \times \SL$.
\end{enumerate}
\end{corollary}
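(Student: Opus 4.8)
The plan is to recognize \(\alpha^{\l}=\alpha_{\mathfrak h}+\l^{-1}\alpha_{\mathfrak p}'+\l\,\alpha_{\mathfrak p}''\) as exactly the \(\l\)-deformation of the Maurer--Cartan form \(\alpha=\Psi^{-1}\mathrm d\Psi\) appearing in the proof of Theorem~\ref{thm:Ruh-Vilms}, and to read the flatness of \(\mathrm d+\alpha^{\l}\) off its curvature as a Laurent polynomial in \(\l\). Flatness of \(\mathrm d+\alpha^{\l}\) is equivalent to the zero-curvature equation \(\mathcal L^{\l}:=\bp U^{\l}-\p V^{\l}+[V^{\l},U^{\l}]=0\) for all \(\l\in\mathbb S^1\), just as in Proposition~\ref{prop:UV}. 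Writing \(U^{\l}=U_0+\l^{-1}U_{-1}\) and \(V^{\l}=V_0+\l V_1\) with
\begin{align*}
U_0 &= \Big(\tfrac14\p u+p\Big)\Vec{e}_1, \qquad U_{-1}=\tfrac{1+\sigma}{2}\big(e^{u/2}\Vec{\hat e}_2+Qe^{-u/2}\Vec{\hat e}_3\big),\\
V_0 &= -\Big(\tfrac14\bp u+\bar p\Big)\Vec{e}_1, \qquad V_1=\tfrac{1-\sigma}{2}\big(\bar Qe^{-u/2}\Vec{\hat e}_2+e^{u/2}\Vec{\hat e}_3\big),
\end{align*}
substitution gives \(\mathcal L^{\l}=\l^{-1}C_{-1}+C_0+\l C_1\), a Laurent polynomial carrying only the powers \(\l^{-1},\l^{0},\l^{1}\); the would-be \(\l^{\pm2}\) terms vanish because \(U_{-1}\) occurs only in the \(\mathrm dz\)-part and \(V_1\) only in the \(\mathrm d\bar z\)-part (equivalently \([\alpha_{\mathfrak p}'\wedge\alpha_{\mathfrak p}']=[\alpha_{\mathfrak p}''\wedge\alpha_{\mathfrak p}'']=0\) for type reasons). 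Since \(\mathbb S^1\) is infinite, vanishing of \(\mathcal L^{\l}\) for all \(\l\) is equivalent to \(C_{-1}=C_0=C_1=0\), where explicitly \(C_{-1}=\bp U_{-1}+[V_0,U_{-1}]\) and \(C_1=-\p V_1+[V_1,U_0]\) are off-diagonal and \(C_0\) is diagonal.

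Next I would use that \(f\) is a genuine surface. Its coordinate frame \(\Psi\) satisfies the Maurer--Cartan identity at \(\l=1\), i.e. \(\mathcal L^{\l}|_{\l=1}=C_{-1}+C_0+C_1=0\): this is precisely the Gauss--Codazzi system, with the diagonal part \(C_0=0\) being the Gauss equation \eqref{eq:Gausseq} and the off-diagonal part \(C_{-1}+C_1=0\) the Codazzi equation \eqref{eq:Codazzieq}, both holding automatically. Eliminating \(C_0=-(C_{-1}+C_1)\) yields \(\mathcal L^{\l}=(\l^{-1}-1)C_{-1}+(\l-1)C_1\), and evaluating at two values \(\l\neq1\) (the resulting linear system has nonzero determinant) shows that (5) is equivalent to \(C_{-1}=C_1=0\).

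Finally I would match this with harmonicity of the Legendrian Gauss map. Using \(*\alpha_{\mathfrak p}'=-i\alpha_{\mathfrak p}'\), \(*\alpha_{\mathfrak p}''=i\alpha_{\mathfrak p}''\) and the cancellation \([\alpha_{\mathfrak p}'\wedge\alpha_{\mathfrak p}'']=[\alpha_{\mathfrak p}''\wedge\alpha_{\mathfrak p}']\) for \(1\)-forms, the harmonic-map operator for \(F\) satisfies \(\mathrm d(*\alpha_{\mathfrak p})+[\alpha\wedge*\alpha_{\mathfrak p}]=i(C_{-1}-C_1)\); indeed comparing the \((1,2)\)- and \((2,1)\)-entries recovers exactly the equations \(\bp X-2\bar pX=0\) and \(\p Y+2pY=0\) from the proof of Theorem~\ref{thm:Ruh-Vilms}. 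Hence statement~(2) is equivalent to \(C_{-1}=C_1\), and combined with the always-valid Codazzi relation \(C_{-1}+C_1=0\) this forces \(C_{-1}=C_1=0\), and conversely. Therefore (5)\(\Leftrightarrow\)(2), and the remaining equivalences with (1), (3) and (4) follow from Theorem~\ref{thm:Ruh-Vilms}. The only genuine computation is the identification \(\mathrm d(*\alpha_{\mathfrak p})+[\alpha\wedge*\alpha_{\mathfrak p}]=i(C_{-1}-C_1)\); conceptually the one point to handle with care is the use of the \(\l=1\) Maurer--Cartan relation to collapse the three coefficient equations \(C_{-1}=C_0=C_1=0\) down to the single harmonicity condition.
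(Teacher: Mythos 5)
Your proof is correct, but it follows a genuinely different route from the paper's. The paper's proof is essentially a citation: it observes that $\alpha^{\l}=\l^{-1}\alpha_{\mathfrak p}^{\prime}+\alpha_{\mathfrak k}+\l\,\alpha_{\mathfrak p}^{\prime\prime}$ is exactly the loop of Maurer--Cartan forms attached to the Lagrangian Gauss map $L\colon M\to\Geo=G/K$, a semi-Riemannian \emph{symmetric} space, and then invokes the general loop-group characterization of harmonicity (Theorem \ref{thm:flatconnections}) to obtain (5) $\Leftrightarrow$ (3) in one stroke, the remaining equivalences coming from Theorem \ref{thm:Ruh-Vilms}. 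You instead reprove this equivalence by hand: you Laurent-expand the curvature as $\l^{-1}C_{-1}+C_0+\l C_1$, use the fact that the $\l=1$ Maurer--Cartan equation (Gauss--Codazzi, automatic because $f$ is a surface, by Proposition \ref{prop:UV}) splits into the diagonal part $C_0=0$ and the off-diagonal part $C_{-1}+C_1=0$, reduce (5) to $C_{-1}=C_1=0$, and then identify $\mathrm{d}(*\alpha_{\mathfrak p})+[\alpha\wedge *\alpha_{\mathfrak p}]=i(C_{-1}-C_1)\,\dz\wedge\dzb$ (this identity is correct), so that (5) is matched against harmonicity of the Legendrian Gauss map $F$, i.e.\ statement (2) rather than (3). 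In substance your Laurent-coefficient argument is the proof of the cited Theorem \ref{thm:flatconnections}, specialized and inlined, so your write-up is self-contained where the paper's is not; the price is that routing through $F$, whose target $G/H$ is only reductive rather than symmetric, makes your step ``(2) $\Leftrightarrow$ $C_{-1}=C_1$'' depend on the additional fact, established inside the paper's proof of Theorem \ref{thm:Ruh-Vilms}, that the first harmonicity condition $[\alpha_{\mathfrak p}^{\prime}\wedge\alpha_{\mathfrak p}^{\prime\prime}]_{\mathfrak p}=0$ holds automatically for this frame; you rely on this silently and should state it explicitly (or route through $L$, where that condition is vacuous). What your approach buys is an explicit bookkeeping of which coefficient equations are automatic (the Gauss--Codazzi ones) and which single coefficient carries the constant-curvature condition; what the paper's buys is brevity and the conceptual identification of $\alpha^{\l}$ as the associated family of the symmetric-space harmonic map $L$.
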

\begin{proof}
 The family of Maurer-Cartan form $\alpha^{\lambda}$ can be decomposed as 
\begin{equation*}
 \alpha^{\lambda} = 
\lambda^{-1} \alpha_{\mathfrak p}^{\prime}
+ \alpha_{\mathfrak k} + 
\lambda \alpha_{\mathfrak p}^{\prime \prime},
\end{equation*}
 according to the decomposition of $\mathfrak g = \mathfrak k + \mathfrak p$
 of the symmetric space $\Geo = G/K$ and the complex structure of $M$. 
 Here note that $\lambda^{-1}\alpha_{\mathfrak p}^{\prime} = \operatorname{Off} (U^{\lambda}) \dz$
 and $\lambda \alpha_{\mathfrak p}^{\prime \prime} =  \operatorname{Off} (V^{\lambda})\dzb$, where  $\operatorname{Off}(X)$ denotes the off-diagonal part of $X$.
 It is well known that, see for example Theorem \ref{thm:flatconnections}
 in Appendix \ref{ap:harmonic}, $\mathrm{d} + \alpha^{\l}$ is flat if and only if 
 the corresponding Lagrangian Gauss map $L: M \to \Geo$ is harmonic.
\end{proof}
\begin{remark}
\mbox{}
\begin{enumerate}
 \item 
 If the surface is a flat surface $K =0$, then the harmonic 
 Legendrian and Lagrangian Gauss maps become conformal.
 In fact a direct computation shows 
 that 
 $\langle \p F,  \p F \rangle  = \langle \p L,  \p L \rangle =0$.
 Moreover under the identification, $\mathrm{Geo}(\mathbb {H}^3)=\mathbb{S}^2\times
 \mathbb{S}^2\setminus\Delta$, the Lagrangian Gauss map 
 $L$ is the pair of hyperbolic Gauss maps.
 For flat surfaces, G{\'a}lvez et al \cite{GMM:flat} 
 showed that both hyperbolic Gauss maps are 
 conformal.

 \item Here $\SL$ is the double cover of $\mathrm{SO}^{+}_{1,3}$
 and we consider it as a real Lie group. Thus $\alpha_{\mathfrak p}^{\prime}$-
 and $\alpha_{\mathfrak p}^{\prime \prime}$-parts of the Maurer-Cartan form 
 in the proof of Theorem \ref{thm:Ruh-Vilms} should be in 
 the complexification of $\slt$, that is, it is given by $\slt \times 
 \slt$, see, e.g., \cite[Section 7.1]{DIK1}. 

\item  The family of flat connections $\mathrm{d} + \alpha^{\l}$ induces 
 a family of maps $\{ \Psi^{\l}\}_{\l \in \mathbb S^1}$ such that
 $(\Psi^{\l})^{-1} \mathrm{d} \Psi^{\l} = \alpha^{\l}$, the so-called  
 extended frame, see Section \ref{subsc:spectral}. 
\end{enumerate}
\end{remark}

\subsection{Spectral parameter}\label{subsc:spectral}
 We now show that the parameter $\l \in \mathbb S^1$ 
 in Corollary \ref{coro:flatconnections} 
 naturally appears in the Gauss-Codazzi equations of 
 a constant Gaussian curvature $K>-1$ surface.
 From \eqref{eq:Gausseq} and \eqref{eq:Codazzieq}, 
 they are given as
\begin{equation}\label{eq:GCforCGC}
\bp \p u+\frac{K}{2}( e^u -|Q|^2e^{-u})=0\quad\mbox{and}\quad \bp Q=0.
\end{equation}
 In particular, note that 
 the function $p=(- \bp Q + e^{-u} Q \p \bar Q)(2e^u -2|Q|^2e^{-u})^{-1}$ vanishes by holomorphicity of $Q$.
\begin{remark}\label{rm:harmoniceq}
 Since the Klotz differential $Q\>\mathrm{d}z^2$ is holomorphic, 
 on a (simply connected) region free of umbilics, we may reparameterize $z$ 
 so that $Q=1$. With respect to such a coordinate $z$, the Gauss-Codazzi equation 
 has the form{\:}
\eq{
\bp \p u+K\sinh u=0, \ \ (K>-1). 
}
This equation appears as a harmonic map equation for 
 $\mathbb{D}\subset \mathbb{C}\to \mathbb{H}^2$ in case of $-1<K<0$ 
 or $\mathbb{S}^2$ in case of $K>0$, respectively, see \cite{Kobayashi}.
\end{remark}
 It is easy to see that the Gauss-Codazzi equations \eqref{eq:GCforCGC}
 are invariant under the 
 deformation $Q\longmapsto \lambda^{-2}Q$ for $\lambda\in \mathbb{S}^1$.
 Then, from fundamental theory of surface, there is a $\mathbb S^1$-family 
 of constant Gaussian curvature $K>-1$ surfaces in $\Ht^3$. 
 Let $\widetilde \Psi$ be the corresponding family of moving frames, 
 that is, 
\begin{equation}\label{eq:tildePsi}
 \widetilde \Psi^{-1} \mathrm{d}\widetilde \Psi = 
 U \dz + V \dzb,
\end{equation}
 where $U$ and $V$ and defined in \eqref{eq:U1} and \eqref{eq:V1}
 with $p = 0$,  $\sigma$ constant and the holomorphic $Q$ and anti-holomorphic 
 $\bar Q$ are replaced by $\l^{-2} Q$ and $\l^{2} \bar Q$,
 respectively. Consider a gauge transformation by right:
\begin{equation*}
 \widetilde \Psi \longmapsto  
 \Psi^{\l} = \widetilde \Psi \diag(\l^{1/2},  \l^{-1/2}).
\end{equation*}
 A straightforward computation shows that 
 $U^{\l}=(\Psi^{\l})^{-1} \p \Psi^{\l}$ and $V^{\l}=(\Psi^{\l})^{-1} \bp \Psi^{\l}$
 are given in \eqref{eq:UVlambda1} and \eqref{eq:UVlambda2} 
 with $p = \bar p = 0$,  respectively. In this way the parameter $\l \in \mathbb S^1$
 appears in the Gauss-Codazzi equations of a constant Gaussian curvature surface.
\begin{definition}
  The solution $\Psi^{\l}$ of 
 \begin{equation}
  (\Psi^{\lambda})^{-1}\mathrm{d}\Psi^{\l}=
 U^{\l} \mathrm{d}z+V^{\l} \mathrm{d}\bar{z}\;\;\;
 \mbox{with} \;\;\;\Psi^{\l}|_{z= z_*} =  {\rm id}
 \end{equation}
 is called the 
 \textit{extended frame} of a constant Gaussian curvature $K>-1$ surface $f$
 and the Lagrangian harmonic Gauss map $L : D \to \Geo$, 
  where $z_*$ is some base point $z_* \in D$. Moreover, the
  parameter $\l \in \mathbb S^1$ in the extended frame $\Psi^{\l}$
  is called the \textit{spectral parameter}. 
\end{definition}
  One can easily check that 
 $U(\l) = U^{\l}$ and $V(\l) = V^{\l}$
 satisfy the following conditions$:$
\eq{ 
 \mathrm{Ad}(\Vec{e}_1)U({\l})=U({-\l}), 
\quad 
 \mathrm{Ad}(\Vec{e}_1)V(\lambda)=V(-\lambda).
} 
 Thus the extended frame $\Psi^{\l}$ takes values in the twisted loop group 
 $\LSL$, see Appendix \ref{ap:loopgroups} for the definition.
 From the extended frame $\Psi^{\l}$, it is easy to obtain a 
 $\mathbb S^1$-family of constant Gaussian curvature $K>-1$ surfaces.
 \begin{proposition}\label{prp:associatefamily}
 Let $\Psi^{\l}$ be the extended frame of some constant Gaussian curvature $K>-1$ 
 surface $f$
 and define respective maps $f^{\l}$ and $n^{\l}$ by
\eq{
  f^{\l} =\Psi^{\l} \Vec{e}_0 (\Psi^{\l})^* \quad\mbox{and}\quad
  n^{\l} =\Psi^{\l} \Vec{e}_1 (\Psi^{\l})^*.
}
  Then, for each $\l \in \mathbb S^1$, $f^{\l}$ is a constant Gaussian curvature 
 $K>-1$ surface with unit normal $n^{\l}$ and 
 the following first and second fundamental forms$:$ 
\eq{
\mathrm{I^{\l}}=\l^{-2} Q \> \dz^2+(e^u +|Q|^2e^{-u})\> \dz \dzb
+\l^{2} \bar Q\>\dzb^2, \quad 
\mathrm{I\!I}^{\l}=\sigma(e^u -|Q|^2e^{-u})\>\dz \dzb.
}
 In particular $f^{\l}|_{\l=1}$ and the original 
 constant Gaussian curvature surface $f$
 are the same surface up to rigid motion.
 Moreover, the maps 
 \eq{
F^{\l}=(f^{\l},n^{\l}):M\to \mathrm{U}\mathbb{H}^3 \quad \mbox{and} \quad
L^{\l} =f^{\l}\wedge n^{\l}:M\to \mathrm{Gr}_{1,1}(\mathbb{E}^{1,3})
}
 are the Legendrian harmonic map and the Lagrangian harmonic map of 
 $f^{\l}$, respectively.
\end{proposition}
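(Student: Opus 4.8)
The plan is to reduce the entire statement to the converse direction of Proposition \ref{prop:UV}, exploiting the fact that for $\l\in\mathbb S^1$ the diagonal gauge relating $\widetilde\Psi$ and $\Psi^{\l}$ is unitary. First I would write $g_\l=\diag(\l^{1/2},\l^{-1/2})$ and note that for $|\l|=1$ one has $g_\l^*=g_\l^{-1}$, so $g_\l$ is unitary and, being diagonal, commutes with $\Vec{e}_1$. Hence $g_\l\Vec{e}_0g_\l^*=\Vec{e}_0$ and $g_\l\Vec{e}_1g_\l^*=\Vec{e}_1$, which gives
\[
f^{\l}=\Psi^{\l}\Vec{e}_0(\Psi^{\l})^*=\widetilde\Psi\,\Vec{e}_0\,\widetilde\Psi^*,\qquad
n^{\l}=\Psi^{\l}\Vec{e}_1(\Psi^{\l})^*=\widetilde\Psi\,\Vec{e}_1\,\widetilde\Psi^*.
\]
Thus the diagonal gauge drops out of $f^{\l}$ and $n^{\l}$, and it suffices to analyze the frame $\widetilde\Psi$, whose Maurer--Cartan form $\widetilde\Psi^{-1}\mathrm d\widetilde\Psi=U\,\dz+V\,\dzb$ is, by \eqref{eq:tildePsi}, exactly of the form \eqref{eq:U1}--\eqref{eq:V1} with $p=0$, the same $u$ and $\sigma$, and $Q$ (resp.\ $\bar Q$) replaced by $\widehat Q:=\l^{-2}Q$ (resp.\ $\l^{2}\bar Q=\overline{\widehat Q}$).

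Next I would verify that the triple $(u,\widehat Q,\sigma)$ again solves the Gauss--Codazzi system in the form \eqref{eq:GCforCGC}, so that the converse part of Proposition \ref{prop:UV} applies verbatim to $\widetilde\Psi$. The decisive point is that $|\l|=1$ forces $|\widehat Q|^2=|Q|^2$, so both the Gauss equation and the open condition $e^{2u}-|\widehat Q|^2>0$ are unchanged, while $\bp\widehat Q=\l^{-2}\bp Q=0$ supplies holomorphicity, i.e.\ the Codazzi equation with $p=0$. Proposition \ref{prop:UV} then yields that $f^{\l}$ is a surface in $\h$ with unit normal $n^{\l}$ and Gaussian curvature $K=-1+\sigma^2>-1$. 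Substituting $\widehat Q$ for $Q$ in \eqref{eq:firstsecond2} (and again using $|\widehat Q|^2=|Q|^2$) reproduces precisely the stated $\mathrm I^{\l}$ and $\mathrm{I\!I}^{\l}$.

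For the normalization at $\l=1$ I would note that $\widehat Q|_{\l=1}=Q$, so $U,V$ then coincide with the Maurer--Cartan form of the original coordinate frame $\Psi$; since both $\widetilde\Psi|_{\l=1}$ and $\Psi$ solve the same linear system, they differ by a left constant $C\in\SL$, whence $f^{\l}|_{\l=1}=C\,f\,C^*$ is $f$ up to the rigid motion $C$. Finally, harmonicity of $F^{\l}$ and $L^{\l}$ is immediate: having shown that $f^{\l}$ is a constant Gaussian curvature surface with $K>-1$ and that $n^{\l}$ is its unit normal, the equivalences $(1)\Leftrightarrow(2)\Leftrightarrow(3)$ of Theorem \ref{thm:Ruh-Vilms} give at once that its Legendrian and Lagrangian Gauss maps are harmonic.

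The computations here are routine; the one genuinely delicate point, which I would emphasize rather than bury, is the twofold role of the constraint $|\l|=1$. It is exactly what makes $g_\l$ unitary, so that $f^{\l}$ and $n^{\l}$ are insensitive to the diagonal gauge, and simultaneously what preserves $|Q|^2$ and hence the Gauss equation. Both features fail once $\l$ leaves the unit circle, which is precisely the deformation mechanism later exploited in Theorem \ref{thm:realHarmonic}; keeping this distinction clean is the main thing to get right in the argument.
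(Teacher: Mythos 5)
Your proposal is correct and takes essentially the same approach as the paper: the paper's (much terser) proof likewise observes that for $\l\in\mathbb S^1$ the diagonal gauge drops out, so that $f^{\l}=\widetilde\Psi\Vec{e}_0\widetilde\Psi^{*}$ and $n^{\l}=\widetilde\Psi\Vec{e}_1\widetilde\Psi^{*}$, and then invokes the converse direction of Proposition \ref{prop:UV}. You have simply made explicit the steps the paper leaves implicit — unitarity of $\diag(\l^{1/2},\l^{-1/2})$, invariance of the Gauss--Codazzi system and of $e^{2u}-|Q|^2>0$ under $Q\mapsto\l^{-2}Q$ with $|\l|=1$, the rigid motion at $\l=1$, and harmonicity via Theorem \ref{thm:Ruh-Vilms}.
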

\begin{proof}
 Since $\l \in \mathbb S^1$, we have 
\eq{
 f^{\l} = \Psi^{\l} \Vec{e}_0 (\Psi^{\l})^* =
\widetilde \Psi \Vec{e}_0 \widetilde \Psi^*,
}
 where $\widetilde \Psi$ is defined in \eqref{eq:tildePsi}.
 Now the claim follows directly from Proposition \ref{prop:UV}.
\end{proof}
\begin{definition}
 A family of surfaces $\{f^{\l}\}_{\l \in \mathbb S^1}$ defined in Proposition
 \ref{prp:associatefamily} is called the {\it associated family} of the 
 constant Gaussian curvature $K>-1$ surface $f(=f^{\l}|_{\l=1})$.
\end{definition}

\subsection{A spectral parameter deformation of the harmonic Lagrangian Gauss maps}\label{sbsc:dLG}
 From Remark \ref{rm:harmoniceq},  
 for a constant Gaussian curvature surfaces with $K>-1$ but $K\neq 0$ in $\h$, 
 it is expected that there are harmonic maps 
 into $\mathbb{H}^2=\ISU/\Uone$ or $\mathbb S^2 
 = \SU/\Uone$ which are obtained from the constant Gaussian curvature surface. 
 Thus it is also expected that a family of frames taking values 
in the loop group of  $\ISU$ or $\SU$, that is, $\LISU$ or $\LSU$, 
 see Appendix \ref{ap:loopgroups} for the definitions.
 Unfortunately, the extended frame $\Psi^{\l}$ of a constant Gaussian curvature 
 $K>-1$ surface does \textit{not} take values in $\LISU$ nor $\LSU$
 for any $\l \in \mathbb S^1$.
 Thus we consider a deformation of the spectral parameter: 
\eq{
 \lambda \in \mathbb S^1 \to  \lambda \in \mathbb C^{\times} (:= \{ c \in \mathbb C \;|\;
 c \neq 0\}).
}
 The extended frame $\Psi^{\l}$ still takes values of $\LSL$.
 In Theorem \ref{thm:realHarmonic} stated in Introduction, we claim that there exist
 two special absolute values  $|\l_0| \neq 1$ of the spectral parameter
 such that the Lagrangian Gauss maps take values $\mathbb S^2$ or $\mathbb H^2$ 
 at $\lambda_0$. 
 We now prove Theorem \ref{thm:realHarmonic}:
%
%
%

\begin{proof}[Proof of {\rm Theorem \ref{thm:realHarmonic}}]
 We first rephrase the conditions of $\lambda_0$.

 $-1< K<0$ and by using a positive $\sigma>0$ such that $K = -1 + \sigma^2$ holds:
 For $-1< K< 0$, that is, $0< \sigma<1$, the 
 condition $|\l_0|=\exp \big(\operatorname{arcosh} \sqrt{-1/K}\big)$ 
 is equivalent to 
\begin{equation}\label{eq:lambdacond2}
|\l_0|= \sqrt{\frac{1+ \sigma}{1- \sigma}}.
\end{equation}
 For $K>0$, that is, $\sigma>1$, the condition 
 $|\l_0|=\exp \big(\operatorname{arsinh} \sqrt{1/K}\big)$ 
 is equivalent to 
\begin{equation}\label{eq:lambdacond}
|\l_0| = \sqrt{\frac{1+\sigma}{-1+ \sigma} }.
\end{equation}
 Let us represent the Lagrangian Gauss map 
 by 
\[
L^{\l} =  i \Psi^{\l}\Vec{e}_1 (\Psi^{\l})^{-1}.
\]
 It is easy to see that 
 $L^{\l}|_{\l \in \C^{\times}}$ still takes values in the symmetric space 
 $\SL/\mathrm{GL}_1\mathbb{C}$.
 Then, $L^{\l}|_{\l = \l_0 \in \C^{\times}}$ 
 takes values in $\mathbb S^2$ (resp. $\Ht^2$), up to conjugation by a
 constant factor in $\SL$ if and only if $\Psi^{\l}|_{\l = \l_0 \in \C^{\times}}$
 takes values in $\SU$ (resp. $\ISU$), up to conjugation by the
 constant factor in $\SL$. 
 Let $U(\l) = U^{\l}$ and $V(\l) = V^{\l}$ be the matrices 
 defined in \eqref{eq:UVlambda1} and \eqref{eq:UVlambda2}, respectively.
 It is clear that $\Psi^{\l}|_{\l = \l_0 \in \C^{\times}}$
 takes values in $\SU$ (resp. $\ISU$), up to conjugation by a
 constant factor in $\SL$, if and only if  
\eq{
 - {}^t \overline {U(\l_0)} = V(\l_0), \ \ \
 (\mbox{resp.}\  -\Vec{e}_1{}^t \overline{U(\l_0)}\Vec{e}_1 = V(\l_0))
}
 holds. A straightforward computation shows that this is equivalent with  
 the equation \eqref{eq:lambdacond2} (resp. \eqref{eq:lambdacond}).

 In case of $\mathbb S^2$, $\l_0$ in \eqref{eq:lambdacond2} 
 exists if and only the surface has constant Gaussian curvature 
 $K= -1 + \sigma^2>0$.
 In case of $\mathbb H^2$, $\l_0$ in \eqref{eq:lambdacond}  exists
 if and only if  the surface has constant Gaussian curvature $-1<K = -1 + \sigma^2< 0$.
 Moreover $\l_0$ is unique up to phase. Since harmonicity of the 
 Lagrangian Gauss map does not depend on the values of $\l \in \C^{\times}$, 
 that is, for any $\l \in \C^{\times}$, the Lagrangian Gauss map 
 $L^{\l}$ is a harmonic map, 
 thus $L^{\l}|_{\l = \l_0}$ is a harmonic map into $\mathbb S^2$
 or $\mathbb H^2$.

 Consider the inner product $\langle \mathrm{d} L^{\l}, \mathrm{d} L^{\l} \rangle$. Here 
 $\langle \>,\> \rangle$
 denotes the Killing metric on $\su$ or $\isu$:
\eq{
\langle A , B \rangle_{\su} = - \frac{1}{2} \operatorname{tr} AB \ \ \mbox{or}\ \
\langle A , B \rangle_{\isu} = \frac{1}{2} \operatorname{tr} AB, 
} see also Remark \ref{rm:Killingmetric}.
 Using the form of $U^{\l}$ and $V^{\l}$, we compute 
 \eq{
 \p L^{\l} &= 2i \l^{-1}X \Psi^{\l} (-e^{u/2} \Vec{\hat e}_2 + Q e^{-u/2} \Vec{\hat e}_3)(\Psi^{\l})^{-1}
 \\
 \bp L^{\l} &= 2 i \l Y   \Psi^{\l} (-\bar Q e^{-u/2}\Vec{\hat e}_2+ e^{u/2} \Vec{\hat e}_3) (\Psi^{\l})^{-1},
}
 where $X =  (1+ \sigma)/2, Y = (1-\sigma)/2$ and $\Vec{\hat e}_2, \Vec{\hat e}_3$ 
 are  defined in \eqref{eq:1-2ent}, \eqref{eq:2-1ent} and \eqref{eq:Vece}, respectively. 
 In case of $\mathbb H^2$,
 that is, in case of $-1< K<0$,  using the condition in \eqref{eq:lambdacond2}, 
 we compute 
 \eq{
\langle \p L^{\l}, \p L^{\l}\rangle_{\isu}\big|_{\l = \l_0} 
= - K e^{-2i \theta}Q, \quad 
 \langle \p L^{\l}, \bp L^{\l}\rangle_{\isu}\big|_{\l = \l_0} 
 = -K \frac{e^u + |Q|^2e^{-u}}{2},
} 
 and $\langle \bp L^{\l}, \bp L^{\l} \rangle_{\isu}\big|_{\l = \l_0}  
 =  - K e^{2i \theta}\bar Q$.
 Similarly, in case of $\mathbb S^2$,
 that is, in case of $K>0$, using the condition in \eqref{eq:lambdacond}, we compute 
 \eq{
\langle \p L^{\l}, \p L^{\l}\rangle_{\su}\big|_{\l = \l_0} = 
 - K e^{-2i \theta}Q, \quad
\langle \p L^{\l}, \bp L^{\l}\rangle_{\su}\big|_{\l = \l_0} = K \frac{e^u + |Q|^2e^{-u}}{2},
}
and $\langle \bp L^{\l}, \bp L^{\l}\rangle_{\su} \big|_{\l = \l_0} =- K e^{2i \theta}\bar Q $.
 From these equations it is easy to see that 
 $L^{\l}|_{\l = \l_0}$ is a local diffeomorphism.
 Moreover if $f$ is a non-totally umbilic constant Gaussian curvature surface 
 with $K>-1$ but $K \neq 0$, 
 then $L^{\l}|_{\l = \l_0}$ is a non-conformal harmonic map. 
\end{proof}

\begin{remark}
\mbox{}
\begin{enumerate}
 \item 
  For a flat ($K=0$, that is, $\sigma = 1$) 
 surface in $\h$, the Lagrangian Gauss map
 does not take values in $\mathbb S^2$ nor $\mathbb H^2$, however, the extended frame $\Psi^{\l}$
 is in fact holomorphic with respect to $z$ 
 by a diagonal gauge $D=\diag(e^{u/4}, e^{-u/4})$, that is, 
 $(\Psi^{\l} D)^{-1} \bp (\Psi^{\l} D) = 0$, see \eqref{eq:UVlambda2}.
 Moreover, the Weierstrass type formula is known for flat surfaces in $\h$,  
 \cite{GMM:flat}. Thus from now on, we do not consider flat surfaces.
\item The absolute value $|\lambda_0|$ in Theorem \ref{thm:realHarmonic}
takes values in $(1, \infty)$. Instead one can choose 
\[
 |\lambda_0| = \exp \big(-\operatorname{arcosh} \sqrt{-1/K}\big), 
\quad \mbox{or}\quad
|\lambda_0| = \exp \big(-\operatorname{arsinh} \sqrt{1/K}\big), 
\]
 in case of $-1<K<0$ or in case of $K>0$, respectively.
 Then, $|\lambda_0|$ takes values in $(0, 1)$, and $L^{\lambda}|_{\lambda = \lambda_0}$
 is a harmonic local diffeomorphism into $\mathbb H^2$ or 
 a harmonic map into $\mathbb S^2$, respectively.
\end{enumerate}

\end{remark}

 Conversely let $\hat L$ be a non-conformal harmonic map into $G/K = \mathbb S^2$ or $\mathbb H^2$. There exists a family of maps $\hat \Psi^{\l}$
 taking values in $\LSU$ or $\LISU$ such that
 \eq{
 (\hat \Psi^{\l})^{-1} \mathrm{d} \hat \Psi^{\l}  = \hat \alpha^{\l} = \l^{-1} \hat \alpha_{\mathfrak p}^{\prime}+ \hat \alpha_{\mathfrak k} + \l \hat \alpha_{\mathfrak p}^{\prime \prime}
\quad \mbox{with} \quad \hat \Psi^{\l}|_{z_*} = {\rm id}
}
 holds, that is, $\hat \Psi^{\l}$ is  the extended frame of $\hat L$. 
 Using $\hat \Psi^{\l}$, we can  give a family of constant Gaussian curvature surface 
 in the following theorem.
\begin{theorem}
 Let $\hat L$ be a non-conformal harmonic map into $\mathbb S^2$ or $\mathbb H^2$
 and $\hat \Psi^{\l}$ the corresponding extended frame. Define respective maps 
 \eq{
  \hat f^{\l} =\hat \Psi^{\l} \Vec{e}_0 (\hat \Psi^{\l})^* 
 \quad\mbox{and}\quad
  \hat n^{\l} =\hat \Psi^{\l} \Vec{e}_1 (\hat \Psi^{\l})^*.
}
 The following statements hold$:$
\begin{enumerate}
 \item In case of $\mathbb H^2$$:$
For any $\lambda_1 \in \C^\times$ with $|\lambda_1|> 1$
$($after replacing $\lambda_1$ by $\lambda_1^{-1}$ if necessary, we may assume $|\lambda_1|>1)$, the specialization $\hat f^{\lambda}\big|_{\lambda=\lambda_1}$ is a surface in $\mathbb{H}^3$ with constant Gaussian curvature
 \eq{-1< K= -\left(\frac{ 2|\l_1|}{|\l_1|^2+1}\right)^2< 0.}
 \item In case of $\mathbb S^2$$:$
For any $\lambda_1 \in \C^\times$ with $|\lambda_1|> 1$
$($after replacing $\lambda_1$ by $\lambda_1^{-1}$ if necessary, we may assume $|\lambda_1|>1)$, the specialization 
 $\hat f^{\l}|_{\l = \l_1}$ is a  surface in $\mathbb H^3$ with 
 constant Gaussian curvature 
 \eq{K= \left(\frac{2|\l_1|}{|\l_1|^2-1}\right)^2>0.}

\end{enumerate}
\end{theorem}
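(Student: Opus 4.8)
The plan is to reverse the construction of Proposition~\ref{prop:UV} at a fixed off-circle value of the spectral parameter. Fix $\lambda_1\in\C^\times$ with $r:=|\lambda_1|>1$ and write $\lambda_1=re^{i\psi}$. Because $\hat\Psi^{\lambda}$ is the extended frame of the harmonic map $\hat L$, the connection $\mathrm d+\hat\alpha^{\lambda}$ is flat for \emph{every} $\lambda\in\C^\times$, so $\hat\Psi^{\lambda_1}\colon M\to\SL$ is a well-defined map on any simply connected domain and $\hat f^{\lambda}|_{\lambda=\lambda_1}=\hat\Psi^{\lambda_1}(\hat\Psi^{\lambda_1})^{\ast}$, $\hat n^{\lambda}|_{\lambda=\lambda_1}=\hat\Psi^{\lambda_1}\Vec e_1(\hat\Psi^{\lambda_1})^{\ast}$ take values in $\h$ and $\s$; invariance of $\langle\cdot,\cdot\rangle$ gives $\langle\hat f,\hat n\rangle=0$ and $\langle\hat n,\hat n\rangle=1$, so $\hat n^{\lambda_1}$ is a unit normal candidate. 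In a conformal coordinate adapted to $\hat L$ the frame satisfies $\hat\Psi^{-1}\p\hat\Psi=\hat U(\lambda)$, $\hat\Psi^{-1}\bp\hat\Psi=\hat V(\lambda)$ with
\[
\hat U(\lambda)=\tfrac14\p u\,\Vec e_1+\lambda^{-1}A,\qquad
\hat V(\lambda)=-\tfrac14\bp u\,\Vec e_1+\varepsilon\,\lambda\,A^{\ast},
\]
where $A=\tfrac12(e^{u/2}\Vec{\hat e}_2+\hat Qe^{-u/2}\Vec{\hat e}_3)$, $\hat Q\,\dz^2$ is the holomorphic Hopf differential of $\hat L$, $u$ its conformal factor, and $\varepsilon=+1$ for $\mathbb H^2$ and $\varepsilon=-1$ for $\mathbb S^2$; this sign $\varepsilon$ is precisely the reality condition distinguishing $\LISU$ from $\LSU$ (cf.~\cite{Kobayashi}).

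Next I would differentiate, using $\p\hat f=\hat\Psi(\hat U+\hat V^{\ast})\hat\Psi^{\ast}$ and $\p\hat n=\hat\Psi(\hat U\Vec e_1+\Vec e_1\hat V^{\ast})\hat\Psi^{\ast}$ exactly as in the proof of Proposition~\ref{prop:UV}. Since $u$ is real the diagonal $\Vec e_1$-terms cancel, leaving
\[
\p\hat f^{\lambda_1}=\tfrac12(\lambda_1^{-1}+\varepsilon\bar\lambda_1)\,\hat\Psi^{\lambda_1}
\bigl(e^{u/2}\Vec{\hat e}_2+\hat Qe^{-u/2}\Vec{\hat e}_3\bigr)(\hat\Psi^{\lambda_1})^{\ast},
\]
\[
\p\hat n^{\lambda_1}=\tfrac12(\lambda_1^{-1}-\varepsilon\bar\lambda_1)\,\hat\Psi^{\lambda_1}
\bigl(-e^{u/2}\Vec{\hat e}_2+\hat Qe^{-u/2}\Vec{\hat e}_3\bigr)(\hat\Psi^{\lambda_1})^{\ast}.
\]
Since $\lambda_1^{-1}\pm\varepsilon\bar\lambda_1=e^{-i\psi}(r^{-1}\pm\varepsilon r)$, the two scalar factors share the phase $e^{-i\psi}$, which makes the ratios below real.

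The key step is to read off the Klotz differential and $\sigma$. Using $\langle\Vec{\hat e}_2,\Vec{\hat e}_2\rangle=\langle\Vec{\hat e}_3,\Vec{\hat e}_3\rangle=0$, $\langle\Vec{\hat e}_2,\Vec{\hat e}_3\rangle=\tfrac12$ and invariance of $\langle\cdot,\cdot\rangle$, I obtain the Klotz differential $Q_{\mathrm{new}}=\langle\p\hat f^{\lambda_1},\p\hat f^{\lambda_1}\rangle=\tfrac14(\lambda_1^{-1}+\varepsilon\bar\lambda_1)^2\hat Q$ and $\langle\p\hat n^{\lambda_1},\p\hat n^{\lambda_1}\rangle=-\tfrac14(\lambda_1^{-1}-\varepsilon\bar\lambda_1)^2\hat Q$. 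As $Q_{\mathrm{new}}$ is a constant multiple of the holomorphic $\hat Q$, the Klotz differential of $\hat f^{\lambda_1}$ is holomorphic; moreover $\mathrm{I\!I}$ is a nonzero multiple of $\dz\,\dzb$, so $z$ is conformal for $[\mathrm{I\!I}]$ and $\det(b_{ij})>0$, whence $\hat f^{\lambda_1}$ is an immersion with $K>-1$ on the locus where $\hat L$ is a local diffeomorphism (equivalently $e^{2u}\neq|\hat Q|^2$). Taking the $(2,0)$-part of the relation $\mathrm{I\!I\!I}=2H\,\mathrm{I\!I}-(K+1)\mathrm{I}$ in \eqref{eq:123relation} gives the general identity $\langle\p\hat n,\p\hat n\rangle=-\sigma^2\langle\p\hat f,\p\hat f\rangle$, hence
\[
\sigma_{\mathrm{new}}^2=\Bigl(\frac{r^{-1}-\varepsilon r}{r^{-1}+\varepsilon r}\Bigr)^2=\Bigl(\frac{1-\varepsilon r^2}{1+\varepsilon r^2}\Bigr)^2 ,
\]
which is constant; by Theorem~\ref{thm:Ruh-Vilms} the curvature $K=-1+\sigma_{\mathrm{new}}^2$ is therefore constant. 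Substituting $\varepsilon=+1$ yields $K=-1+\bigl(\tfrac{r^2-1}{r^2+1}\bigr)^2=-\bigl(\tfrac{2r}{r^2+1}\bigr)^2\in(-1,0)$, and $\varepsilon=-1$ yields $K=-1+\bigl(\tfrac{r^2+1}{r^2-1}\bigr)^2=\bigl(\tfrac{2r}{r^2-1}\bigr)^2>0$, as claimed.

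The main obstacle is the sign $\varepsilon$ together with the orientation bookkeeping: it is exactly $\varepsilon$ that interchanges the roles of $r\pm r^{-1}$ and thereby turns the $\mathbb H^2$ case into $-1<K<0$ and the $\mathbb S^2$ case into $K>0$, so this sign in the $\LISU$/$\LSU$ reality condition must be tracked with care. A cosmetic point is that matching $\tfrac{1+\sigma}{2}$ against the coefficient of $\Vec{\hat e}_2$ forces $\sigma_{\mathrm{new}}<0$ when $r>1$; this is merely a choice of orientation of the normal (it leaves $\sigma^2$, hence $K$, unchanged) and is fixed by taking $\hat n^{\lambda_1}$ so that $\mathrm{I\!I}>0$. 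Alternatively one can bypass \eqref{eq:123relation} altogether: conjugating $\hat\Psi^{\lambda_1}$ by the constant unitary $\diag(e^{-i\psi/2},e^{i\psi/2})$ brings $\hat U(\lambda_1),\hat V(\lambda_1)$ into the normal form \eqref{eq:U1}, \eqref{eq:V1} with $p=0$, the constant $\sigma_{\mathrm{new}}$ and the holomorphic $Q_{\mathrm{new}}$, after which the converse half of Proposition~\ref{prop:UV} delivers the surface and its curvature directly.
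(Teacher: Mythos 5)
Your proposal is correct, and its main line is a genuinely different route from the paper's; interestingly, the alternative you sketch in your closing sentences (conjugate $\hat U(\lambda_1),\hat V(\lambda_1)$ into the normal form \eqref{eq:U1}--\eqref{eq:V1} and invoke the converse half of Proposition \ref{prop:UV}) is essentially the paper's actual proof. The paper proceeds by normalization: it rescales the harmonic-map data, setting $e^{u/2}=\tfrac{1\pm|\lambda_1|^2}{2|\lambda_1|}e^{\hat u/2}$ and $Q=\bigl(\tfrac{1\pm|\lambda_1|^2}{2|\lambda_1|}\bigr)^2\hat Q$, so that the Maurer--Cartan form of $\hat\Psi^{\lambda}$ evaluated at $\lambda_1=|\lambda_1|e^{it}$ becomes literally the extended-frame form \eqref{eq:alphahat} of a constant Gaussian curvature surface with constant $\sigma$ and circle parameter $e^{it}$, and then cites Proposition \ref{prp:associatefamily} to obtain the surface and its curvature. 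You instead never rescale: you compute $\p\hat f^{\lambda_1}$ and $\p\hat n^{\lambda_1}$ from the frame equations, verify that $\mathrm{I\!I}$ is conformal in $z$, read off the $(2,0)$-parts of $\mathrm{I}$ and $\mathrm{I\!I\!I}$ via the invariant pairing, and extract $K+1$ from the pointwise identity \eqref{eq:123relation} as the constant ratio $\bigl(\tfrac{r^{-1}-\varepsilon r}{r^{-1}+\varepsilon r}\bigr)^2$, the cancellation of the common phase $e^{-i\psi}$ being the key point. Your route is gauge-invariant, sidesteps the sign blemishes of the matching argument (the paper's $\mathbb S^2$ scaling factor $\tfrac{1-|\lambda_1|^2}{2|\lambda_1|}$ is negative for $|\lambda_1|>1$, just as your $\sigma_{\mathrm{new}}$ is, and you correctly dispose of this as an orientation choice), and makes explicit the immersion locus $e^{2\hat u}\neq|\hat Q|^2$, which the paper leaves implicit in the hypothesis $e^{2u}-|Q|^2>0$ of Proposition \ref{prop:UV}; what the paper's route buys is the full first and second fundamental forms and the associated-family interpretation for free. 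One small logical point: at the (isolated) zeros of $\hat Q$ your ratio defining $\sigma_{\mathrm{new}}^2$ is indeterminate, so the argument there should be ordered as ``the Klotz differential $Q_{\mathrm{new}}\,\dz^2$ is a constant multiple of $\hat Q\,\dz^2$, hence holomorphic, hence $K$ is constant by Theorem \ref{thm:Ruh-Vilms}, and the ratio at points with $\hat Q\neq 0$ identifies the constant''; your appeal to Theorem \ref{thm:Ruh-Vilms} is thus doing real work, but as written it reads as if constancy of $\sigma_{\mathrm{new}}^2$ were established everywhere beforehand.
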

\begin{proof}
 The Maurer-Cartan form $\hat \alpha^{\l} = (\hat \Psi^{\l})^{-1} 
 \mathrm{d} \hat \Psi^{\l}$ for the extended frame $\hat \Psi^{\l}$
 of a non-conformal harmonic map $\hat L$ has the following form$:$
 \eq{\hat \alpha^{\l} = \l^{-1} \hat \alpha_{\mathfrak p}^{\prime}+ \hat 
 \alpha_{\mathfrak k} + \l \hat \alpha_{\mathfrak p}^{\prime \prime},}
 where 
\eq{ \alpha_{\mathfrak k}=\frac{1}{4}\left(\p \hat u \> \dz
-\bp \hat u \> \dzb \right)\Vec{e}_1,   \quad 
 \alpha_{\mathfrak p}^{\prime} 
 =  \frac{1}{2}\left(
 e^{\hat u/2} \Vec{\hat e}_2  + \hat Q e^{- \hat u/2}\Vec{\hat e}_3
\right) \dz, \quad 
}
%
 and $\alpha_{\mathfrak p}^{\prime\prime}  
 = \mp {}^t\overline{{\alpha_{\mathfrak p}^{\prime}}}$.
 Here, the plus sign is chosen in case of $\mathbb H^2$ and the 
 minus sign is chosen in case of $\mathbb S^2$.
 Moreover, $\hat Q\; \dz^2$ is a holomorphic quadratic differential of the harmonic 
 map $\hat L$ defined by $\hat Q = \langle \p \hat L, \p \hat L \rangle $
 and the $e^{\hat u(z)}\rho^{-2}(z)$, where $\rho^2(z)= 4/(1-|z|^2)^2$, 
 is the $\p$-energy of $\hat L$.  Note that 
 relation of the coefficient of $\Vec{\hat e}_2$ and the 
 coefficient of $\Vec{e}_1$ is given by the Maurer-Cartan equation $
 (\mathrm{d} \hat \alpha^{\l} + \frac{1}{2}[\hat \alpha^{\l} \wedge \hat \alpha^{\l}])|_{\l = 1}=0$
 and the non-conformality of the harmonic map $\hat L$.

  In case of $\mathbb H^2$: Choose $ |\l_1| >1$, 
 and define $e^{u/2}$ and 
 $Q$ by 
 \eq{
 e^{u/2} = \frac{1+|\l_1|^2}{2 |\l_1|}  e^{\hat u/2}
 \quad \mbox{and}\quad 
 Q = \left(\frac{1+|\l_1|^2}{2 |\l_1|}\right)^2 \hat Q.
}
 It is easy to see that $\p \hat u = \p u$ and $\bp \hat u = \bp u$.
 The Maurer-Cartan form $\hat \alpha^{\lambda}$ at $\lambda_1  
 = |\l_1| e^{i t}, \;(t \in \R)$
\begin{align}\label{eq:alphahat}
\hat \alpha^{\l}|_{\lambda = \lambda_1} = 
 \frac{1}{4} \left(\p u \> \dz  - \bp u \> \dzb\right)\Vec{e}_1
  &+ \frac{1 + \sigma}{2} 
 e^{-i t}\left(e^{u/2} \Vec{\hat e}_2 + Qe^{- u/2} \Vec{\hat e}_3 \right) \dz
  \\ \nonumber &+
 \frac{1 - \sigma}{2} e^{i t} \left(\bar Qe^{- u/2} \Vec{\hat e}_2 + e^{u/2} \Vec{\hat e}_3 \right) \dzb
\end{align}
 with $|\sigma| = |(1-|\l_1|^2)(1+|\l_1|^2)^{-1}|<1$, 
 that is, $\hat \Psi^{\l}|_{\lambda =\lambda_1}$ is the extended frame of some constant Gaussian curvature $-1< K= -1 + \sigma^2 < 0$ surface in $\mathbb H^3$.
 Therefore by using Proposition \ref{prp:associatefamily}, the map 
 $\hat f^{\l}|_{\l_1}$ gives a surface in $\mathbb H^3$ with 
 constant Gaussian curvature \eq{
 -1< K = -1+\sigma^2 = -\left(\frac{ 2 |\l_1|}{|\l_1|^2+1}\right)^2<0.
}
 In case of $\mathbb S^2$: Choose $|\l_1|>1$, 
 and define $e^{u/2}$ and 
 $Q$ by 
 \eq{
e^{u/2}  = \frac{1-|\l_1|^2}{2 |\l_1|}  e^{\hat u/2}
 \quad \mbox{and}\quad 
 Q =  \left(\frac{1-|\l_1|^2}{2 |\l_1|}\right)^2 \hat Q.
}
 It is easy to see that $\p \hat u = \p u$ and $\bp \hat u = \bp u$.
 The Maurer-Cartan form $\hat \alpha^{\lambda}$ at $\lambda_1
 = |\l_1| e^{i t}, \;(t \in \R)$ becomes \eqref{eq:alphahat} 
 with $|\sigma| = |(1+|\l_1|^2)(1-|\l_1|^2)^{-1}|>1$,  
 that is, $\hat \Psi^{\l}|_{\lambda =\lambda_1}$ is the 
 extended frame of some constant Gaussian curvature $K = -1 + \sigma^{-2}>0$ 
 surface in $\mathbb H^3$.
 Therefore by using Proposition \ref{prp:associatefamily}, the map 
 $\hat f^{\l}|_{\l_1}$ gives a surface in $\mathbb H^3$ 
 with constant Gaussian curvature 
\eq{
 K = -1+\sigma^2 = \left(\frac{2 |\l_1|}{|\l_1|^2-1}\right)^2>0.
 }
 This completes the proof.
\end{proof}

\section{Classification of weakly complete constant Gaussian curvature surfaces}\label{sc:Complete}
 It is known that the $\p$-energy density of a harmonic map into $\mathbb H^2$
 is $e^{u} \rho^{-2}$, where $\rho^2 = 4 /(1-|z|^2)^2$ 
 and  the metric $e^{u} \> \dz \dzb$ is a natural metric 
 for the harmonic map, \cite{Wan}.
 Unfortunately, this metric is not directly related to the metric 
 of the constant Gaussian curvature $K>-1$ surface. 
 Thus we consider another natural metric which is induced from sum of the first 
 and third fundamental forms, the so-called \textit{weak metric},
 that is the pullback of the
 Sasaki metric on the unit tangent sphere bundle $\UH$, 
 and we show the equivalence between the 
completeness of the weak metric and that of the metric $e^{u} \dz \dzb$.
 We then give a classification of constant Gaussian curvature $-1 < K <0$ 
 surfaces with the weak metric such that it is complete, 
 the so-called weakly complete  metric. Finally, we show the existence of 
complete and equivariant complete constant Gaussian curvature $-1< K <0$ surfaces.
 \subsection{Weakly complete metrics and a classification of 
 constant Gaussian curvature surfaces}\label{sbsc:classification} 
 Let $f : M \to \h$ be a surface with the unit normal $n: M \to \mathbb S^{1,2}$.
 We define a weakly complete metric of the surface $f$ with $K>-1$.
\begin{definition}
 For a surface $f$ with $K>-1$, a metric 
\begin{equation}\label{eq:weakmetric}
 \mathrm{d} t^2 = 
 \langle \mathrm{d} f, \mathrm{d} f\rangle +\frac1{1+K} \langle \mathrm{d} n, \mathrm{d} n\rangle
\end{equation} 
 is called the \textit{weak metric} and 
 if the weak metric $\mathrm{d} t^2$ is complete, it is 
 called the \textit{weakly complete metric} and the surface
 $f$ is called the \textit{weakly complete surface}.
\end{definition}
\begin{remark}
\mbox{}
\begin{enumerate}
\item The notion of weak completeness was originally defined for a 
 constant negative Gaussian curvature surfaces in $\R^3$, see \cite{MS}.

\item For $K=0$, the weak metric is a pullback of the 
 \textit{Sasaki metric} of the unit tangent sphere bundle $\UH$, 
 $\mathrm{d} t^2 = \langle \mathrm{d} f, \mathrm{d} f\rangle + \langle \mathrm{d} n, \mathrm{d} n\rangle$.
 Note that the natural metric of the Legendrian immersion 
 $F= (f, n)$ is given by $\langle \mathrm{d} F, \mathrm{d} F \rangle= \langle \mathrm{d} f, \mathrm{d} f\rangle - \langle \mathrm{d} n, \mathrm{d} n\rangle$, which is indefinite,
\cite{DIK1}.
\end{enumerate}
\end{remark}
\begin{lemma}\label{lm:completeness}
 Let $f: M \to \h$ be a constant Gaussian curvature $K>-1$ surface.
 Moreover let $\mathrm{d}s^2 = e^u \> \dz \dzb$ be the metric defined by 
 the function $e^u$ given in \eqref{eq:firstsecond2}.
 The surface $f$ is a weakly complete surface if and only 
 if the metric $\mathrm{d}s^2$ is complete.
\end{lemma}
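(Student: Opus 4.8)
The plan is to compute the weak metric $\mathrm{d}t^2$ explicitly in the conformal coordinate $z$ and then compare it with $\mathrm{d}s^2 = e^u\,\dz\dzb$ via a uniform (bi-Lipschitz) estimate on the conformal factors. Since $K=-1+\sigma^2$ is constant we have $1+K=\sigma^2$, so by \eqref{eq:third2},
\[
\frac{1}{1+K}\,\mathrm{I\!I\!I} = -Q\,\dz^2 + (e^u+|Q|^2e^{-u})\,\dz\dzb - \bar Q\,\dzb^2.
\]
Adding this to $\mathrm{I}$ from \eqref{eq:firstsecond2}, the terms $\pm Q\,\dz^2$ and $\pm\bar Q\,\dzb^2$ cancel and one obtains the purely conformal expression
\[
\mathrm{d}t^2 = \langle\mathrm{d}f,\mathrm{d}f\rangle + \frac{1}{1+K}\langle\mathrm{d}n,\mathrm{d}n\rangle = 2(e^u+|Q|^2e^{-u})\,\dz\dzb.
\]

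Next I would observe that both $\mathrm{d}t^2$ and $\mathrm{d}s^2$ are conformal metrics on the Riemann surface $(M,[\mathrm{I\!I}])$, with ratio of conformal factors
\[
\frac{\mathrm{d}t^2}{\mathrm{d}s^2} = 2\bigl(1+|Q|^2e^{-2u}\bigr).
\]
Recall from the discussion preceding \eqref{eq:firstsecond2} that the positivity $m>0$ forces $e^{2u}>|Q|^2$, hence $0\le |Q|^2e^{-2u}<1$ everywhere on $M$. Therefore the ratio satisfies the two-sided bound
\[
2 \le \frac{\mathrm{d}t^2}{\mathrm{d}s^2} < 4,
\]
so that $\mathrm{d}t^2$ and $\mathrm{d}s^2$ are uniformly equivalent as Riemannian metrics.

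Finally I would conclude by the standard fact that uniformly equivalent metrics are simultaneously complete: the estimate yields $\sqrt{2}\,\ell_s(\gamma)\le \ell_t(\gamma)\le 2\,\ell_s(\gamma)$ for the lengths of any path $\gamma$, so a curve leaving every compact subset of $M$ has infinite $\mathrm{d}s$-length if and only if it has infinite $\mathrm{d}t$-length. By the characterization of completeness via divergent paths (equivalently, by bi-Lipschitz invariance of completeness together with Hopf--Rinow), $\mathrm{d}t^2$ is complete if and only if $\mathrm{d}s^2$ is complete, which is the claim. I expect the only substantive step to be the explicit computation of $\mathrm{d}t^2$---specifically noticing that the factor $1/(1+K)=\sigma^{-2}$ is exactly what cancels the $\sigma^2$ in $\mathrm{I\!I\!I}$ and kills the $(2,0)$- and $(0,2)$-parts; once the weak metric is seen to be conformal, the comparison is routine, with the decisive input being the already-recorded inequality $e^{2u}>|Q|^2$.
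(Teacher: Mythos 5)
Your proposal is correct and follows essentially the same route as the paper: both reduce the lemma to the computation $\mathrm{d}t^2 = 2(e^u+|Q|^2e^{-u})\,\dz\dzb$ in the second-conformal coordinate and then exploit the inequality $e^{2u}>|Q|^2$. The only difference is cosmetic --- the paper concludes via divergent curves (splitting $\sqrt{e^u+|Q|^2e^{-u}}$ and using $e^{-u/2}|Q|<e^{u/2}$), whereas you package the same inequality as the two-sided bound $2\le \mathrm{d}t^2/\mathrm{d}s^2<4$ and invoke bi-Lipschitz invariance of completeness.
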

\begin{proof}
 Let $\mathrm{d} t^2 = \mathrm{I} + \frac1{1+K}\mathrm{I\!I\!I}$ be the weak metric of the 
 surface $f$, where $\mathrm{I} = \langle \mathrm{d} f, \mathrm{d} f\rangle $ 
 and $\mathrm{I\!I\!I} = \langle \mathrm{d} n, \mathrm{d} n\rangle$ are the first and third fundamental forms, respectively. 
 It is then easy to see that,  by \eqref{eq:firstsecond2} and \eqref{eq:third2},  $\mathrm{d} t^2$ can be computed as
\eq{
\mathrm{d} t^2 = 2 (e^{u} + |Q|^2 e^{-u}) \> \dz \dzb.
}
 Moreover, since $f$ is a constant Gaussian curvature, if necessary by choosing 
 the unit normal as $-n$, we have $e^{2u} > |Q|^2$, see \eqref{eq:firstsecond2}.
 If $e^u \> \dz \dzb$ is complete, then $\mathrm{d} t^2$ is complete, 
that is, the surface is a weakly complete surface. 
 Conversely if $\mathrm{d} t^2$ is complete, then 
 $\displaystyle \int_C \sqrt{2 (e^{u} + |Q|^2 e^{-u})} |\dz|$
  needs to diverge along any divergent
 curve $C$ on $M$, that is, either 
 \eq{\sqrt{2}\int_C e^{u/2} |\dz|\quad \mbox{or} \quad \sqrt{2}\int_C e^{-u/2} |Q| |\dz|} 
 needs to diverge.
 The latter case also implies that $\displaystyle \int_C e^{u/2}|\dz|$ diverges
 for any divergent curve $C$ on $M$, 
 since $ e^{u}> |Q|$. This completes the proof.
\end{proof}
 As stated Theorem \ref{thm:classification} in Introduction, we 
 classify weakly complete 
 constant negative Gaussian curvature $-1<K<0$ surfaces.
%
\begin{proof}[Proof of {\rm Theorem \ref{thm:classification}}]
 From Theorem \ref{thm:Ruh-Vilms}, for a constant negative Gaussian curvature surface 
 $-1< K<0$ in $\h$, the Klotz differential is holomorphic. Since the equivalent 
 class of the surface, that is a rigid motion, does not change the Klotz differential, thus we can define a map 
 $g$
 from $\mathcal K$ into $\mathcal {QD}$.
 
 Let us consider the Gauss-Codazzi equations 
\begin{equation}\label{eq:GCK}
 \bp \p u+ \frac{K}{2}( e^u -|Q|^2e^{-u})=0,\ \ \bp Q=0
\end{equation}
 for a constant Gaussian curvature $-1< K<0$ surface $f$.
 Note that the second equation is nothing but holomorphicity equation of $Q$. 
 It is well known that, using a conformal change of coordinates $z \to \sqrt{-K} z$, 
 the first equation is the elliptic PDE for 
 harmonic maps from $\D$ or $\C$ into $\mathbb H^2$, see \cite{Wan, 
 WanAu}.
 It is also well known that for a given quadratic differential $Q \> \dz^2$ 
 on $\D$ or $\C$ which is not identically zero on $\C$, 
 there exists a unique solution $u$ of the first equation in \eqref{eq:GCK} 
 such that 
 $e^u \> \dz \dzb$ is complete. 
 Moreover, it has been shown in \cite[Theorem 3.2]{TW} that 
 if the holomorphic quadratic differential $Q \> \dz^2$ 
 is not constant on $\mathbb C$, then 
 the corresponding harmonic map is global diffeomorphism and 
 satisfies the inequality 
 \eq{e^{2u} >|Q|^2,}
 see \cite[Lemma 1.3 (2)]{TW}.
 Therefore by fundamental theorem of surface theory 
there exists a unique constant Gaussian curvature $-1< K
 = -1 + \sigma^2 <0$
 surface $f$, up to rigid motion, such that the holomorphic 
 Klotz differential is $Q \> \dz^2$
 and ${\rm I}^{(1, 1)} + \sigma^{-1} {\rm I\!I}^{(1, 1)} =2 e^u \> \dz \dzb$,
 where the superscript $(1, 1)$ denotes the $(1, 1)$-part of the fundamental forms.
 Note that since $\det \tilde {\rm I} = (e^u -|Q|^2e^{-u})^2 >0$,
 where $\tilde {\rm I}$ is the coefficient matrix of the first fundamental 
 form ${\rm I}$, $f$ never degenerate.
 Moreover, from Lemma \ref{lm:completeness}, the resulting surface $f$ 
 is a weakly complete surface.  
 Note that since the equivalence class 
 of the holomorphic  quadratic differential 
 is given by a M\"obius transformation on $\D$ or $\C$,
 it corresponds to change of conformal parameters of the corresponding constant 
 Gaussian curvature surface.
 Therefore,  $g: \mathcal K \to \mathcal {QD}$ is bijective.
 This completes the proof.
\end{proof}
\begin{remark}
\mbox{}
\begin{enumerate}
 \item 
  In general, weakly complete constant Gaussian curvature $-1< K<0$ surfaces
 would have non-trivial topology. The most simplest example is an equidistant cylinder 
 which have the non-zero constant Klotz differential. We discuss 
 further examples in a forthcoming paper. 

\item  If the holomorphic quadratic differential is non-zero constant 
 on $\mathbb C$, then the corresponding harmonic map is not diffeomorphism, 
 and in fact $e^{2u} = |Q|^2= c \neq 0$ holds and the corresponding surface 
 becomes degenerate, that is, a geodesic in $\h$. 

\item Theorem \ref{thm:classification} does not hold for weakly complete constant 
 Gaussian curvature $K>0$ surfaces in $\h$, since there are 
 infinitely many such surfaces for a given holomorphic quadratic differential $Q \>\dz^2$.
 Moreover, the resulting surface could be degenerate at some point, 
 that is, $e^{2u }= |Q|^2$ occurs.
 \end{enumerate}

\end{remark}
\subsection{Complete constant Gaussian curvature surfaces}
  As corollaries of Theorem \ref{thm:classification} we show the 
  existence of complete and  equivariant constant Gaussian 
 curvature $-1< K <0$ surfaces in $\h$, respectively.
\begin{corollary}
 Let $Q \> \dz^2$ be a bounded holomorphic quadratic differential on the unit disk 
 $\D$ with 
 respect to the Poincare metric $\mathrm{d} s_p^2 = \rho^2(z) \dz \dzb$ with 
 $\rho^2(z) = 4(1-|z|^2)^{-2}$, that is, 
 \eq{
 \sup_{z \in \D}|Q(z)|\rho^{-2}(z) < \infty.  
 }
 There exists a unique complete constant Gaussian curvature $-1< K<0$ surface 
 in $\h$ whose Klotz differential is $Q \> \dz^2$.
\end{corollary}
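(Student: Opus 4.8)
The plan is to obtain the surface from Theorem \ref{thm:classification} and then upgrade weak completeness to completeness using the boundedness of $Q$. Since a bounded holomorphic quadratic differential is in particular a holomorphic quadratic differential on $\D$, Theorem \ref{thm:classification} produces a constant Gaussian curvature $-1<K<0$ surface $f$ with Klotz differential $Q\>\dz^2$, unique up to rigid motion, and this $f$ is weakly complete; equivalently, by Lemma \ref{lm:completeness}, the metric $\mathrm{d}s^2=e^u\>\dz\dzb$ is complete. Because any complete surface is weakly complete (the first fundamental form $\mathrm{I}$ is dominated by the weak metric $\mathrm{d}t^2$), it suffices to prove that $\mathrm{I}$ itself is complete; uniqueness of the complete surface then follows from the uniqueness already in Theorem \ref{thm:classification}.

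Next I would reduce completeness of $\mathrm{I}$ to a pointwise lower bound against $\mathrm{d}s^2$. Writing $\mathrm{I}$ from \eqref{eq:firstsecond2} in the real coordinates $z=x+yi$, its coefficient matrix against $\dz\dzb=\mathrm{d}x^2+\mathrm{d}y^2$ has eigenvalues $(e^{u/2}\pm|Q|e^{-u/2})^2$, so the smaller principal density of $\mathrm{I}$ is $e^{u/2}-|Q|e^{-u/2}=e^{u/2}(1-|Q|e^{-u})$, which is positive because $e^{2u}>|Q|^2$. Hence for every tangent vector $v$ one has $\mathrm{I}(v,v)\ge (1-|Q|e^{-u})^2\,e^{u}\,|v|^2$, and $\mathrm{I}$ is complete as soon as $|Q|e^{-u}$ is uniformly bounded away from $1$: if $k:=\sup_{z\in\D}|Q|e^{-u}<1$, then $\mathrm{I}\ge(1-k)^2\,\mathrm{d}s^2$, and completeness of $\mathrm{d}s^2$ forces completeness of $\mathrm{I}$. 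The pointwise inequality $|Q|e^{-u}<1$ is not enough, since $|Q|e^{-u}$ could tend to $1$ along a divergent curve fast enough that $\int_C(1-|Q|e^{-u})e^{u/2}|\dz|$ stays finite; securing the strict uniform bound $k<1$ is the main obstacle.

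This is exactly where boundedness of $Q$ enters, through harmonic map theory. By Theorem \ref{thm:realHarmonic}, the Lagrangian Gauss map $L^{\l}|_{\l=\l_0}\colon\D\to\mathbb{H}^2$ is a harmonic diffeomorphism whose Hopf differential $\langle\p L^{\l},\p L^{\l}\rangle_{\isu}\,\dz^2$ equals $-K e^{-2i\theta}Q\,\dz^2$; its $\p$- and $\bp$-energy densities are $-Ke^u$ and $-K|Q|^2e^{-u}$, so its complex dilatation $\mu$ satisfies $|\mu|=|Q|e^{-u}$. The hypothesis $\sup_{z\in\D}|Q|\rho^{-2}<\infty$ says precisely that this Hopf differential is bounded with respect to the Poincar\'e metric, so by the theory of harmonic maps into $\mathbb{H}^2$ with bounded Hopf differential (Wan \cite{Wan}, see also \cite{WanAu}) the map $L^{\l}|_{\l=\l_0}$ is quasiconformal, i.e.\ $\|\mu\|_{\infty}=\sup_{z\in\D}|Q|e^{-u}=k<1$. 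Feeding this into the lower bound of the previous paragraph yields $\mathrm{I}\ge (1-k)^2\,\mathrm{d}s^2$, proving that $\mathrm{I}$, and hence $f$, is complete. Equivalently, quasiconformality makes $L^{\l}|_{\l=\l_0}$ a diffeomorphism onto all of $\mathbb{H}^2$, so that $-K\,\mathrm{I}=\langle\mathrm{d}L^{\l},\mathrm{d}L^{\l}\rangle_{\isu}|_{\l=\l_0}$ is an isometric copy of the complete hyperbolic plane, giving completeness at once.
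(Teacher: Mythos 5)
Your proof is correct, and while it rests on the same external input as the paper (the Wan/Tam--Wan theory of harmonic maps with bounded Hopf differential), it concludes completeness by a genuinely different and more elementary mechanism. The paper never forms your eigenvalue estimate: it invokes \cite[Theorem 2.1]{TW} to produce a quasiconformal diffeomorphism $\zeta$ of $\D$ solving the Beltrami equation $\bp\zeta=\mu\,\p\zeta$ with $\mu=\bar Q e^{-u}$, then uses \cite[Lemma 2.4]{TW} to identify $e^u\rho^{-2}$ with the $\p$-energy of $\zeta$, and finally rewrites the first fundamental form as $\mathrm{I}=\frac1{|K|}\rho^2(\zeta)\,|\mathrm{d}\zeta|^2$, i.e.\ the pullback under the global diffeomorphism $\zeta:\D\to\D$ of the complete Poincar\'e metric, whence completeness. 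You instead extract from quasiconformality only the scalar bound $k=\sup_{\D}|Q|e^{-u}<1$ and combine it with the pointwise estimate $\mathrm{I}\ge e^u\bigl(1-|Q|e^{-u}\bigr)^2\,\dz\dzb\ge(1-k)^2\,\mathrm{d}s^2$ and the completeness of $\mathrm{d}s^2=e^u\,\dz\dzb$ already secured by Theorem \ref{thm:classification}; this bypasses the Beltrami equation and the energy identification entirely, and you also make the uniqueness assertion explicit (complete $\Rightarrow$ weakly complete $\Rightarrow$ uniqueness from Theorem \ref{thm:classification}), which the paper leaves implicit. What the paper's route buys in exchange is more than bare completeness: it exhibits the intrinsic metric as $\tfrac1{|K|}$ times the hyperbolic metric pulled back by a quasiconformal harmonic diffeomorphism of $\D$ onto $\D$ --- your closing ``equivalently'' sentence is exactly this argument in disguise. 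One point of care: quoting Wan's theorem in the form ``a harmonic diffeomorphism of $\mathbb{H}^2$ \emph{onto itself} is quasiconformal iff its Hopf differential is bounded'' presupposes that $L^{\l}|_{\l=\l_0}$ is onto $\mathbb{H}^2$, which is not known at that stage; to avoid circularity you should cite the construction direction of the theory (Wan's existence theorem, or \cite[Theorem 2.1]{TW}): for bounded $Q$, the unique solution $u$ with $e^u\,\dz\dzb$ complete satisfies $\sup_{\D}|Q|e^{-u}<1$. Since that is precisely the inequality your comparison argument consumes, this is a matter of citation precision rather than a gap.
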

\begin{proof}
 We need to prove completeness of the constant Gaussian curvature surface $f$
 with the bounded Klotz differential $Q\> \dz^2$.
 The first fundamental form ${\rm I}$ of the surface $f$ constructed in the proof 
 of Theorem \ref{thm:classification} is given as 
\eq{\mathrm{I}=Q \> \dz^2+(e^u +|Q|^2e^{-u})\> \dz \dzb
+\bar Q\>\dzb^2.
}
 It can be  rephrased as 
 \eq{\mathrm{I}=\frac1{|K|}e^{u} \left|
 \mathrm{d}{\hat z}+ \mu \mathrm{d} \bar{\hat z}
 \right|^2, \quad \mu= \frac{\bar Q}{e^u}, \quad \hat z = \sqrt{-K} z. 
}
 Note that the coordinates $\hat z$ normalizes the Gauss equation in \eqref{eq:GCforCGC}
 as the standard harmonic map setting, and for simplicity, we write $\hat z$ by $z$. 
 Since $Q \> \dz^2$ is bounded,  by \cite[Theorem 2.1]{TW}, 
 there exists a quasi-conformal diffeomorphism $\zeta$ on $\D$ 
 which satisfies the Beltrami equation $\bp \zeta = \mu \p \zeta$.
 Here $\mu$ can be rephrased as
\[
 \mu = \frac{\bar Q}{e^u } = \frac{\bar Q \rho^{-2}}{e^u \rho^{-2}},
\quad \rho^2(z) = \frac4{(1-|z|^2)^{2}},
\]
 and $e^u \rho^{-2}$ is the $\p$-energy of the deformed Lagrangian Gauss map 
 $L^{\lambda}|_{\lambda = \lambda_0}$ into $\mathbb H^2$. Moreover by \cite[Lemma 2.4]{TW} 
 the solution $\zeta$ is a harmonic diffeomorphism onto $(\D, \rho^2|d\zeta|^2)$ 
 and the $\p$-energy of $\zeta$ is again $e^u \rho^{-2}$.
 Therefore the first fundamental $\mathrm{I}$ can be rephrased in the 
 coordinates $\zeta$ as
\begin{equation}\label{eq:Iwithw}
 \mathrm{I} =
\frac1{|K|}e^{u} \left|
 \mathrm{d}{z}+ \mu \mathrm{d} \bar{z}
 \right|^2 =  \frac1{|K|}\frac{e^u}{|\p \zeta|^{2}} |\mathrm{d} \zeta|^2
= \frac1{|K|}\frac{e^u \rho^{-2}}{|\p \zeta|^{2}} \rho^2 |\mathrm{d} \zeta|^2
=  \frac1{|K|}\rho^2 |\mathrm{d} \zeta|^2.
\end{equation}
 Thus $\mathrm{I}$ defines a complete metric.
 This completes the proof.
\end{proof}
%
 From Gauss-Bonnet theorem, it is easy to see that there does not exist 
 a constant Gaussian curvature $-1< K< 0$ surface in $\h$ of genus zero.
 Therefore in the following corollary, we omit the genus zero case.
\begin{corollary}\label{coro:equivariance}
 Let $Q \> \dz^2$ be a holomorphic quadratic differential defined on a Riemann surface
 $M = \widetilde M/\Gamma$, where $\widetilde M$ is the unit disk $\D$ or $\C$ and 
 $\Gamma$ is a Fuchsian group, respectively. Moreover assume that in case of 
 $\widetilde M = \C$, $Q \> \dz^2$ is not constant.
 There exists a unique 
 weakly complete 
 constant Gaussian curvature $-1<K<0$  surface $f: \widetilde M \to \h$
 satisfying the following equivariant property$:$
 There exist a representation $\rho$ from $\Gamma$
 to the isometry group $\operatorname{Iso} \h$
 such that for any $\gamma \in \Gamma$ the following relation holds$:$
 \eq{
f\circ \gamma  =  \rho(\gamma) \circ f.
}
\end{corollary}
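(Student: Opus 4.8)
The plan is to build the surface directly on the universal cover $\widetilde M$ from the lifted quadratic differential, and then read off equivariance from the uniqueness clause in Theorem~\ref{thm:classification}. First I would lift $Q\,\dz^2$: since it is a holomorphic quadratic differential on $M=\widetilde M/\Gamma$, its pullback $\widetilde Q\,\dz^2$ to $\widetilde M$ is a $\Gamma$-invariant holomorphic quadratic differential, i.e.\ $\gamma^{*}(\widetilde Q\,\dz^2)=\widetilde Q\,\dz^2$ for every $\gamma\in\Gamma$. In the case $\widetilde M=\C$ the hypothesis that $Q\,\dz^2$ is not constant ensures $\widetilde Q\,\dz^2$ is not constant, so that $\widetilde Q\,\dz^2$ lies in exactly the class handled by Theorem~\ref{thm:classification}. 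Applying the construction in the proof of that theorem to $\widetilde Q\,\dz^2$ on $\widetilde M$ then yields a weakly complete constant Gaussian curvature $-1<K<0$ surface $f:\widetilde M\to\h$, unique up to rigid motion, whose Klotz differential is $\widetilde Q\,\dz^2$; by the construction it is a genuine immersion, since $\det\tilde{\mathrm I}=(e^{u}-|Q|^2e^{-u})^2>0$.

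The core step is to produce the representation $\rho$. Fix $\gamma\in\Gamma$ and consider the reparametrized surface $f\circ\gamma:\widetilde M\to\h$. Since $\gamma$ is a biholomorphism of $\widetilde M$ it preserves the second conformal structure, and a short computation shows that the Klotz differential of $f\circ\gamma$, expressed in the standard coordinate of $\widetilde M$, equals $\gamma^{*}(\widetilde Q\,\dz^2)$, hence $\widetilde Q\,\dz^2$ by $\Gamma$-invariance. Moreover $f\circ\gamma$ has the same constant $K$ and, being the pullback of $f$ by a self-diffeomorphism of $\widetilde M$, is again weakly complete (equivalently, by Lemma~\ref{lm:completeness}, the metric $e^{u}\,\dz\dzb$ stays complete). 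Thus $f$ and $f\circ\gamma$ are two weakly complete constant Gaussian curvature $-1<K<0$ surfaces on $\widetilde M$ with the same Klotz differential, and the uniqueness in Theorem~\ref{thm:classification} supplies an isometry $\rho(\gamma)\in\operatorname{Iso}\h$ with $f\circ\gamma=\rho(\gamma)\circ f$.

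Next I would verify that $\gamma\mapsto\rho(\gamma)$ is a homomorphism. From $f\circ(\gamma_1\gamma_2)=\rho(\gamma_1)\circ\rho(\gamma_2)\circ f=\rho(\gamma_1\gamma_2)\circ f$ one sees that $\rho(\gamma_1)\rho(\gamma_2)$ and $\rho(\gamma_1\gamma_2)$ agree on the image $f(\widetilde M)$. The decisive point here — and the step I expect to be the main obstacle — is the rigidity statement that an isometry of $\h$ is determined by its restriction to $f(\widetilde M)$. This holds precisely because $-1<K<0$: the image cannot lie in a point, a geodesic, or a totally geodesic $\mathbb H^2$ (the last would force $K=-1$), so it is contained in no proper totally geodesic submanifold, and the only isometry fixing it pointwise is the identity. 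This gives $\rho(\gamma_1\gamma_2)=\rho(\gamma_1)\rho(\gamma_2)$, so $\rho:\Gamma\to\operatorname{Iso}\h$ is a representation and $f$ is the desired equivariant surface.

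Finally, uniqueness follows once more from Theorem~\ref{thm:classification}: any weakly complete constant Gaussian curvature $-1<K<0$ surface on $\widetilde M$ realizing the same Klotz differential $\widetilde Q\,\dz^2$ differs from $f$ by a rigid motion, which merely conjugates $\rho$. Thus the substantive content is concentrated in the rigidity argument of the previous paragraph, while the lifting and the passage to $\rho$ are formal consequences of the bijection established in Theorem~\ref{thm:classification} and the completeness equivalence of Lemma~\ref{lm:completeness}.
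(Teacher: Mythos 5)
Your proposal is correct and follows essentially the same route as the paper: lift $Q\,\dz^2$ to a $\Gamma$-invariant differential on $\widetilde M$, invoke the existence and uniqueness of Theorem~\ref{thm:classification} to produce $f$, and obtain $\rho(\gamma)$ from the fact that $f\circ\gamma$ is again weakly complete with the same Klotz differential. The only difference is that you verify details the paper leaves implicit --- notably the homomorphism property of $\rho$ via the rigidity argument that no nontrivial isometry of $\h$ fixes $f(\widetilde M)$ pointwise (since $-1<K<0$ rules out the image lying in a totally geodesic plane) --- which is a sound and worthwhile addition.
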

\begin{proof}
 From Theorem \ref{thm:classification}, for a given holomorphic quadratic differential 
 $Q \> \dz^2$ on $\D$ or $\C$, there exists a unique 
 weakly complete constant Gaussian curvature $-1<K = -1 + \sigma^2 <0$ 
 surface $f$ from $\D$ or $\C$ into $\h$ such that the Klotz differential 
 of $f$ is $Q \> \dz^2$. 
 Moreover since $Q \> \dz^2$ is invariant with respect to 
 $\Gamma$, that is,  for any $\gamma \in \Gamma$,  $Q \circ \gamma = Q$ holds and 
 the uniqueness of the surface $f$, there exists
 a representation $\rho: \Gamma \to \operatorname{Iso} \h$ such 
 that for any $\gamma \in \Gamma$  
 \eq{
  f \circ \gamma = \rho(\gamma) \circ f
}
 holds. This completes the proof.
\end{proof}
\begin{remark}
\mbox{}
\begin{enumerate}
 \item 
 In \cite[Theorem D]{La}, a similar statement to Corollary \ref{coro:equivariance} 
 has been proved using completely different idea, the so-called 
``asymptotic Plateau problem'', that is, the existence of 
 a constant Gaussian curvature $-1< K<0$ surface 
 which is asymptotically to a given curve $\gamma$ 
 in the ideal boundary of $\mathbb H^3$.
\item
 To show the existence of 
 weakly complete constant Gaussian curvature $-1< K<0$ 
 surfaces $f$ which have nontrivial topology, 
 we need 
 \eq{
 \rho (\gamma) = \operatorname{id}
}
 for some $\gamma \in \Gamma$. It is not easy to see this condition holds for a
 given holomorphic quadratic differential.
 \end{enumerate}
 \end{remark}

\appendix
\section{Loop groups}\label{ap:loopgroups}
 Let $\SL$ be the special linear group of degree $2$ and its Lie algebra 
 $\slt$. The \textit{twisted loop group} of $\SL$ is a set of 
 smooth maps 
\eq{
 \LSL =  \{ g : \mathbb S^1 \to \SL \;|\; \tau g(\lambda) = g(-\lambda)  \},
}
 where $\tau$ is defined by $\tau g = \mathrm{Ad} (\Vec{e}_1) g$.
 It is known that $\LSL$ is a complex Banach Lie group with respect to 
 a suitable topology, see \cite{LoopGroup}. The Lie algebra of $\LSL$ is 
 \eq{
 \lslt =  \{ g : \mathbb S^1 \to \slt \;|\; \tau g(\lambda) = g(-\lambda)  \},
}
 and it is called the \textit{twisted loop algebra} of $\LSL$.
 We then define two subgroups $\LSU$ and $\LISU$ as 
\eq{
\LSU &= \{ g\in \LSL\;|\;   {}^t\overline {g(1/\bar \l)}^{-1} = g(\l) \}, \\
\LISU &= \{g\in \LSL\;|\; \Vec{e}_1 {}^t \overline{g(1/\bar \l)}^{-1}\Vec{e}_1 
 = g(\l)\},
}
 and they are called the \textit{twisted loop groups} of $\SU$ and $\ISU$, 
 respectively. It is easy to see that these subgroups are real forms 
 of the complex Banach Lie group $\LSL$, that is, they are given by fixed point 
 sets of anti-linear involutions on $\LSL$.
\section{Harmonic maps into symmetric spaces}\label{ap:harmonic}
 It is known that harmonic maps from a Riemann surface into 
 a symmetric space can be characterized by loop groups, see,  e.g., \cite{BP}.
 In this paper we consider two particular symmetric spaces, that is, 
 the $2$-sphere $\mathbb S^2$ and the hyperbolic $2$-space $\mathbb H^2$.
 Thus we briefly explain how harmonic maps into these symmetric spaces
 are characterized.

 Let $M$ be a Riemann surface and 
 $G/K$ be a symmetric space $\mathbb S^2 = \SU/\Uone$ or 
 $\mathbb H^2 = \ISU/\Uone$. Moreover, let $\varphi : M \to G/K$
 be a smooth map and $F$ a frame taking values in $G$. Moreover, let 
 $\alpha = F^{-1} \mathrm{d} F$ be the Maurer-Cartan form of $F$.
 It is known that $\mathrm{d} + \alpha$ gives a flat connection, that is, 
 the partial differential equation
\eq{
 \mathrm{d} \alpha + \frac{1}{2}[\alpha \wedge \alpha]=0
}
 is satisfied.
 We decompose 
 the Maurer-Cartan form $\alpha = F^{-1} \mathrm{d} F$ by 
 \eq{
 \alpha = \alpha_{\mathfrak k} + \alpha_{\mathfrak p}^{\prime} +  
 \alpha_{\mathfrak p}^{\prime \prime},
}
 according to the decomposition $\mathfrak g = \mathfrak k \oplus \mathfrak p$
 of the Lie algebra $\mathfrak g$ and $\mathfrak k$ of $G$ and $K$, respectively,
 and $\prime$ and $\prime \prime$ denotes 
 $(1, 0)$- and $(0, 1)$-parts on $M$, respectively. Define 
 a family of Maurer-Cartan forms $\alpha^{\l}$ as
\eq{
\alpha^{\l} =\l^{-1}\alpha_{\mathfrak p}^{\prime} + \alpha_{\mathfrak k} +  
 \l  \alpha_{\mathfrak p}^{\prime \prime}, \quad \l \in \C^{\times}.
}
 From the construction, it is clear to see that $\mathrm{d} + \alpha^{\l}|_{\l =1}$
 gives the original flat connection $d + \alpha$. The harmonicity of 
 the map $\varphi$ is characterized as follows.
\begin{theorem}[\cite{BP}]\label{thm:flatconnections}
 A smooth map $\varphi : M \to G/K$ is harmonic if and only if 
 $\mathrm{d} + \alpha^{\l}$ is a flat connection, that is, the partial 
 differential equation 
 \eq{
 \mathrm{d} \alpha^{\l} + \frac{1}{2}[\alpha^{\l} \wedge \alpha^{\l}] =0
 }
 is satisfied.
\end{theorem}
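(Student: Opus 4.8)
The plan is to treat the $\l$-dependent curvature $\Omega^{\l}:=\mathrm d\alpha^{\l}+\tfrac12[\alpha^{\l}\wedge\alpha^{\l}]$ as a Laurent polynomial in $\l$, to read off its coefficients from the symmetric-pair relations $[\mathfrak k,\mathfrak k]\subseteq\mathfrak k$, $[\mathfrak k,\mathfrak p]\subseteq\mathfrak p$, $[\mathfrak p,\mathfrak p]\subseteq\mathfrak k$ together with the fact that $M$ is a Riemann surface, and then to recognise the single surviving equation as the harmonic map equation. Throughout I write $A=\alpha_{\mathfrak k}$, $B=\alpha_{\mathfrak p}^{\prime}$ and $C=\alpha_{\mathfrak p}^{\prime\prime}$, so that $\alpha=A+B+C$ and $\alpha^{\l}=\l^{-1}B+A+\l C$.

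First I would use that $\alpha=F^{-1}\mathrm d F$ is a genuine Maurer--Cartan form, so $\Omega^{1}=\mathrm d\alpha+\tfrac12[\alpha\wedge\alpha]=0$ identically. Expanding $\Omega^{\l}$ and collecting powers of $\l$, the Hodge type on a Riemann surface --- where every $2$-form has type $(1,1)$ --- makes $[B\wedge B]$ (type $(2,0)$) and $[C\wedge C]$ (type $(0,2)$) vanish and forces $\mathrm d B=\bp B$, $\mathrm d C=\p C$, so the $\l^{\pm2}$ terms drop out and
\[
 \Omega^{\l}=\l^{-1}\big(\bp B+[A^{\prime\prime}\wedge B]\big)+\big(\mathrm d A+\tfrac12[A\wedge A]+[B\wedge C]\big)+\l\big(\p C+[A^{\prime}\wedge C]\big),
\]
where $A^{\prime},A^{\prime\prime}$ are the $(1,0)$- and $(0,1)$-parts of $A$. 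The $\l^{0}$-coefficient is $\mathfrak k$-valued and the $\l^{\pm1}$-coefficients are $\mathfrak p$-valued; separating the $\mathfrak k$- and $\mathfrak p$-parts of $\Omega^{1}=0$ shows that the $\l^{0}$-coefficient vanishes identically (this uses $[\mathfrak p,\mathfrak p]\subseteq\mathfrak k$ and is exactly the statement that the integrability condition $[\alpha_{\mathfrak p}^{\prime}\wedge\alpha_{\mathfrak p}^{\prime\prime}]_{\mathfrak p}=0$ is vacuous for a symmetric target) and that the two $\mathfrak p$-valued coefficients $\Theta:=\bp B+[A^{\prime\prime}\wedge B]=\bp^{\nabla}\alpha_{\mathfrak p}^{\prime}$ and $\Xi:=\p C+[A^{\prime}\wedge C]=\p^{\nabla}\alpha_{\mathfrak p}^{\prime\prime}$ satisfy $\Theta+\Xi=0$, where $\nabla=\mathrm d+\operatorname{ad}A$ is the connection induced by $\alpha_{\mathfrak k}$. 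Thus $\Omega^{\l}\equiv0$ for all $\l\in\C^{\times}$ reduces to $\Theta=\Xi=0$.

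The heart of the argument is the reality structure. Since $F$ takes values in the real form $G=\SU$ or $\ISU$, the form $\alpha_{\mathfrak p}$ is real, so $C=\overline B$ under the conjugation fixing $\mathfrak g$, and therefore $\Xi=\overline\Theta$; the identity $\Theta+\Xi=0$ then reads $\Theta+\overline\Theta=0$, i.e.\ $\Re\Theta=0$, which always holds. On the other hand the tension-field (Euler--Lagrange) equation $\mathrm d(*\alpha_{\mathfrak p})+[\alpha\wedge * \alpha_{\mathfrak p}]=0$ has $\mathfrak p$-component $\mathrm d^{\nabla}(*\alpha_{\mathfrak p})=0$; using $*B=-iB$ and $*C=iC$ this becomes $-i\Theta+i\Xi=-i(\Theta-\overline\Theta)$, which vanishes precisely when $\Im\Theta=0$. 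Because $\Re\Theta=0$ is automatic, harmonicity of $\varphi$ (that is, $\Im\Theta=0$) is equivalent to $\Theta=0$, hence to $\Theta=\Xi=0$, hence to flatness of $\mathrm d+\alpha^{\l}$ for all $\l$. This gives the theorem.

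The step I expect to be the main obstacle is the simultaneous bookkeeping of Hodge type and reality: one must check that the conjugation fixing $\mathfrak g$ sends $B$ to $C$ and thereby interchanges $\Theta$ and $\Xi$, so that the relation $\Theta+\Xi=0$ forced by $\Omega^{1}=0$ supplies exactly the real part of $\Theta$ that the harmonic map equation leaves undetermined. A secondary point requiring care is the identification of $\mathrm d^{\nabla}(*\alpha_{\mathfrak p})=0$ with vanishing of the tension field of $\varphi$; this is the standard gauge-theoretic form of the harmonic map equation for a symmetric target, where the canonical reductive connection agrees with the Levi--Civita connection, and granting it the computation above closes the equivalence.
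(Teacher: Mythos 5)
Your proof is correct, and since the paper itself gives no proof of this theorem (it is quoted from Burstall--Pedit \cite{BP}, with the harmonicity criterion \eqref{eq:harmonicity} stated as the definition you also start from), the right comparison is with that standard cited argument --- which is exactly what you reproduce: expand the curvature of $\mathrm{d}+\alpha^{\l}$ as a Laurent polynomial, kill the $\l^{\pm 2}$ terms by Hodge type, use flatness of $\alpha=\alpha^{\l}|_{\l=1}$ together with $[\mathfrak p,\mathfrak p]\subseteq\mathfrak k$ to make the $\l^{0}$ coefficient vanish and to force $\Theta+\Xi=0$, and identify the tension-field equation with $-i\Theta+i\Xi=0$. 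The only inessential layer is your appeal to the reality structure ($\Xi=\overline{\Theta}$, so that the two conditions become $\Re\Theta=0$ and $\Im\Theta=0$): this is correct for $G=\SU$ or $\ISU$, but it is not needed, since the two linearly independent equations $\Theta+\Xi=0$ and $\Theta-\Xi=0$ already force $\Theta=\Xi=0$ for any symmetric target, which is how the argument is usually closed.
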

  
  \section{Harmonic maps into symmetric spaces}\label{ap:harmonic}
\subsection{}
We recall that harmonicity from a Riemann surface $M$ 
 into a homogeneous semi-Riemannian space $N = G/H$. 
 For more detail, see, e.g., \cite{BP,DIK1}. 
 Let $\varphi:M\to G/H$ be a smooth map of a simply connected Riemann surface 
 into a reductive homogeneous semi-Riemannian space $N = G/H$ equipped 
 with a reductive decomposition $\mathfrak{g}
 =\mathfrak{h}\oplus\mathfrak{p}$. 
 There exists a smooth map $\Phi:M\to G$ such that 
 $\Phi H=\varphi$. Such a map $\Phi$ is called a 
 \emph{frame} of $\varphi$. The Lie algebra 
 $\mathfrak{g}$-valued $1$-form 
 $\alpha=\Phi^{-1}\mathrm{d}\Phi$ is called the 
 \emph{Maurer-Cartan form} of $\Phi$. 
 The Maurer-Cartan form $\alpha$ defines a 
 flat connection $\mathrm{d}+\alpha$ on the 
 principal bundle $M\times G$. The flatness is 
 expressed as the following \textit{zero curvature equation}:
 
\[
\mathrm{d} \alpha + \frac{1}{2}[\alpha \wedge \alpha]=0.
\]
 We decompose 
 the Maurer-Cartan form $\alpha$ by 
 \eq{
 \alpha = \alpha_{\mathfrak k} + \alpha_{\mathfrak p},
 \quad 
 \alpha_{\mathfrak p}=
 \alpha_{\mathfrak p} ^{\prime} +  
 \alpha_{\mathfrak p}^{\prime \prime},
}
 according to the reductive 
 decomposition $\mathfrak{g} = \mathfrak{k}\oplus\mathfrak{p}$
 and $\prime$ and $\prime \prime$ denotes 
 $(1, 0)$- and $(0, 1)$-parts on $M$, respectively.

 The harmonicity of a map $\varphi$ can be characterized by the Maurer-Cartan form 
 $\alpha$ as 
\begin{equation}\label{eq:harmonicity}
[\alpha^{\prime}_{\mathfrak p}\wedge 
\alpha^{\prime\prime}_{\mathfrak p}]_{\mathfrak p}=0= 
\mathrm{d}(*\alpha_\mathfrak{p})
+[\alpha\wedge *\alpha_{\mathfrak p}],
\end{equation}
 where the subscript $\mathfrak h$ and $\mathfrak p$ 
 denote the $\mathfrak h$- and $\mathfrak p$-part, respectively. 
 Moreover $*$ denotes the Hodge star operator, for example $* \dz = -i \dz$ and 
 $* \dzb = i \dzb$.
 In particular if the target homogeneous space $G/H$ is a semi-Riemannian symmetric space, then the 
 first condition is vacuous.
\subsection{}
 It is known that harmonic maps from a Riemann surface into 
 a semi-Riemannian symmetric space can be characterized by loop groups, see, e.g., 
 \cite{BP}.
 In this paper we consider two particular symmetric spaces, that is, 
 the $2$-sphere $\mathbb S^2$ and the hyperbolic $2$-space $\mathbb H^2$.
 Thus we briefly explain how harmonic maps into these symmetric spaces
 are characterized.

 Let $M$ be a simply connected Riemann surface and 
 $G/K$ be a Riemannian symmetric space $\mathbb S^2 = \SU/\Uone$ or 
 $\mathbb H^2 = \ISU/\Uone$. Moreover, let $\varphi : M \to G/K$
 be a smooth map and $\Phi$ a framing of $\varphi$. 
 Consider the Maurer-Cartan form 
 $\alpha=\Phi^{-1}\mathrm{d}\Phi=
 \alpha_{\mathfrak k} + \alpha_{\mathfrak p}^{\prime}+
 \alpha_{\mathfrak p}^{\prime\prime}$ and define 
 a family of Maurer-Cartan forms $\alpha^{\l}$ as
\[
\alpha^{\l} =\l^{-1}\alpha_{\mathfrak p}^{\prime} + \alpha_{\mathfrak k} +  
 \l  \alpha_{\mathfrak p}^{\prime \prime}, \quad \l \in \C^{\times}.
\]
 From the construction, it is clear to see that $\mathrm{d} + \alpha^{\l}|_{\l =1}$
 gives the original flat connection $d + \alpha$. The harmonicity of 
 the map $\varphi$ is characterized as follows.
\begin{theorem}[\cite{BP}]\label{thm:flatconnections}
A smooth map $\varphi : M \to G/K$ of a simply connected Riemann surface into a semi-Riemannian symmetric space $G/K$ is harmonic if and only if 
 $\mathrm{d} + \alpha^{\l}$ is a flat connection for all $\lambda$, that is, the partial 
 differential equation 
 \eq{
 \mathrm{d} \alpha^{\l} + \frac{1}{2}[\alpha^{\l} \wedge \alpha^{\l}] =0
 }
 is satisfied for all $\lambda \in \C^{\times}$.
\end{theorem}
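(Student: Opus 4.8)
The plan is to exploit that $\Phi$ is a genuine map into $G$, so that its Maurer--Cartan form $\alpha=\Phi^{-1}\mathrm{d}\Phi$ automatically satisfies the structure equation $\mathrm{d}\alpha+\frac12[\alpha\wedge\alpha]=0$, and then to read off the harmonicity condition \eqref{eq:harmonicity} as one specific Fourier coefficient (in $\l$) of the curvature of $\mathrm{d}+\alpha^{\l}$. Concretely, I would first record that $F^{\l}:=\mathrm{d}\alpha^{\l}+\frac12[\alpha^{\l}\wedge\alpha^{\l}]$ is a Laurent polynomial in $\l$, so that its vanishing for all $\l\in\C^{\times}$ is equivalent to the vanishing of each coefficient. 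Expanding with $\alpha^{\l}=\l^{-1}\alpha_{\mathfrak p}'+\alpha_{\mathfrak k}+\l\alpha_{\mathfrak p}''$ and using the symmetric-space relations $[\mathfrak k,\mathfrak k]\subset\mathfrak k$, $[\mathfrak k,\mathfrak p]\subset\mathfrak p$, $[\mathfrak p,\mathfrak p]\subset\mathfrak k$, the curvature carries powers $\l^{-2},\dots,\l^{2}$, with the extreme coefficients $\frac12[\alpha_{\mathfrak p}'\wedge\alpha_{\mathfrak p}']$ and $\frac12[\alpha_{\mathfrak p}''\wedge\alpha_{\mathfrak p}'']$ landing in $\mathfrak k$, and the $\l^{-1}$ and $\l^{1}$ coefficients $R^{(-1)}:=\mathrm{d}\alpha_{\mathfrak p}'+[\alpha_{\mathfrak p}'\wedge\alpha_{\mathfrak k}]$ and $R^{(1)}:=\mathrm{d}\alpha_{\mathfrak p}''+[\alpha_{\mathfrak k}\wedge\alpha_{\mathfrak p}'']$ landing in $\mathfrak p$.

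Next I would dispose of three of the five coefficients for free. Because $M$ is a Riemann surface, $\alpha_{\mathfrak p}'$ is locally $a\,\dz$ for a $\mathfrak p$-valued function $a$, so $[\alpha_{\mathfrak p}'\wedge\alpha_{\mathfrak p}']=[a,a]\,\dz\wedge\dz=0$, and likewise $[\alpha_{\mathfrak p}''\wedge\alpha_{\mathfrak p}'']=0$; hence the $\l^{\pm2}$ coefficients vanish identically. The $\l^{0}$ coefficient is precisely the $\mathfrak k$-part of $F^{1}=\mathrm{d}\alpha+\frac12[\alpha\wedge\alpha]$, which is zero since $\alpha=\Phi^{-1}\mathrm{d}\Phi$ satisfies the Maurer--Cartan equation; the $\mathfrak p$-part of the same equation reads $R^{(-1)}+R^{(1)}=0$. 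Thus, granting that $\Phi$ is a frame, flatness of $\mathrm{d}+\alpha^{\l}$ for all $\l$ is equivalent to the single pair of conditions $R^{(-1)}=R^{(1)}=0$.

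Then I would translate harmonicity into these same quantities. For a symmetric space the first condition in \eqref{eq:harmonicity} is vacuous, since $[\alpha_{\mathfrak p}'\wedge\alpha_{\mathfrak p}'']$ lands in $\mathfrak k$ and so $[\alpha_{\mathfrak p}'\wedge\alpha_{\mathfrak p}'']_{\mathfrak p}=0$; harmonicity is therefore just $\mathrm{d}(*\alpha_{\mathfrak p})+[\alpha\wedge*\alpha_{\mathfrak p}]=0$. Using $*\alpha_{\mathfrak p}'=-i\,\alpha_{\mathfrak p}'$ and $*\alpha_{\mathfrak p}''=i\,\alpha_{\mathfrak p}''$, the symmetry $[\beta\wedge\gamma]=[\gamma\wedge\beta]$ valid for Lie-algebra-valued $1$-forms, and once more $[\alpha_{\mathfrak p}'\wedge\alpha_{\mathfrak p}']=[\alpha_{\mathfrak p}''\wedge\alpha_{\mathfrak p}'']=0$, a short computation collapses the left-hand side to $i\,(R^{(1)}-R^{(-1)})$. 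Hence harmonicity is equivalent to $R^{(1)}=R^{(-1)}$; combined with the free identity $R^{(-1)}+R^{(1)}=0$, this is equivalent to $R^{(-1)}=R^{(1)}=0$, which by the previous paragraph is exactly flatness of $\mathrm{d}+\alpha^{\l}$ for all $\l$. Both implications of the theorem then follow at once, and the simple connectivity of $M$ enters only to guarantee a global lift $\Phi$ through $G\to G/K$.

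I expect the main obstacle to be almost entirely bookkeeping: pinning down the Hodge-star signs and the bracket conventions so that the harmonicity equation genuinely reduces to $R^{(1)}=R^{(-1)}$ rather than to $R^{(1)}+R^{(-1)}=0$, which would make the statement content-free against the Maurer--Cartan identity. The single structural input is the symmetric-space bracket splitting, which is what confines the curvature to these five slots and, crucially, forces the $\l^{0}$ piece to coincide with the flatness of $\Phi$ itself, leaving harmonicity to govern precisely the off-diagonal $\l^{\pm1}$ terms.
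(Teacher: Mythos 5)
Your proof is correct. Note that the paper itself gives no proof of this statement---it is quoted as a known result of Burstall--Pedit \cite{BP}---and your argument is exactly the standard one from that theory: expand the curvature of $\mathrm{d}+\alpha^{\lambda}$ as a Laurent polynomial, observe that the $\lambda^{\pm2}$ coefficients $\tfrac12[\alpha_{\mathfrak p}'\wedge\alpha_{\mathfrak p}']$ and $\tfrac12[\alpha_{\mathfrak p}''\wedge\alpha_{\mathfrak p}'']$ vanish identically on a Riemann surface, that the $\lambda^{0}$ coefficient is the $\mathfrak k$-part of the Maurer--Cartan equation of the frame $\Phi$ (hence automatic), and that harmonicity collapses, with the paper's conventions $*\,\mathrm{d}z=-i\,\mathrm{d}z$ and $*\,\mathrm{d}\bar z=i\,\mathrm{d}\bar z$, to $R^{(1)}=R^{(-1)}$, which against the $\mathfrak p$-part identity $R^{(-1)}+R^{(1)}=0$ yields $R^{(\pm1)}=0$. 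Your sign bookkeeping is right where it matters: the cross terms $\mp i[\alpha_{\mathfrak p}''\wedge\alpha_{\mathfrak p}']\pm i[\alpha_{\mathfrak p}'\wedge\alpha_{\mathfrak p}'']$ cancel by the symmetry $[\beta\wedge\gamma]=[\gamma\wedge\beta]$ for $1$-forms, so the criterion is genuinely $R^{(1)}=R^{(-1)}$ and not the content-free $R^{(1)}+R^{(-1)}=0$.
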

  

\textbf{Conflict of interest:} On behalf of all authors, the corresponding author states that there is no conflict of
interest. No new data were created or analyzed in this study.

\bibliographystyle{plain}

\begin{thebibliography}{99}
\bibitem{BIK}
D.~Brander, J.~Inoguchi, S.-P.~Kobayashi,
\newblock Constant Gaussian curvature surfaces in the $3$-sphere via loop groups, 
\newblock {\em Pacific J. Math.} \textbf{269}(2014), no.~2, 281--303.

\bibitem{BP}
F.~E.~Burstall, F.~Pedit,
\newblock Harmonic maps via Adler-Kostant-Symes theory, in 
\newblock {\em Harmonic maps and integrable systems}, 221--272,
Aspects Math., E23, Friedr. Vieweg, Braunschweig, 1994.

\bibitem{DIK1}
 J.~F.~Dorfmeister, J.~Inoguchi, S.-P.~Kobayashi, 
\newblock  Constant mean curvature surfaces in  hyperbolic 3-space via loop group, 
\newblock  {\em J. Reine Angew. Math.} \textbf{686}(2014), 1--36, arXiv:1108.1641 [math DG]. 

\bibitem{GMM:flat}
J.~A.~G{\'a}lvez, A.~Martinez, F.~Milan,
\newblock Flat surfaces in the hyperbolic {$3$}-space,
\newblock {\em Math. Ann.} \textbf{316}(2000), 419--435.

\bibitem{Kit}
Y.~Kitagawa, 
\newblock Periodicity of the asymptotic curves on flat tori in $S^3$.
\newblock {\em J. Math. Soc. Japan} \textbf{40}(1988), no.~3, 457--476.

\bibitem{Klotz}
T.~Klotz, 
\newblock Some uses of the second conformal structure on strictly convex surfaces, 
\newblock {\em Proc. Amer. Math. Soc.} \textbf{14}(1963), 793--799.

\bibitem{Kobayashi}
S.-P.~Kobayashi,
\newblock Real forms of complex surfaces of constant mean curvature.
\newblock {\em Trans. Amer. Math. Soc.} \textbf{363}(2011), no.~4, 
1765--1788.

\bibitem{Kobayashi2}
S.-P. Kobayashi,
\newblock The complex landslide flow and the method of integrable systems.
\newblock {\em Int. Math. Res. Not. IMRN} (2025), no.~7,  rnaf072, 12 pp.

\bibitem{La}
F.~Labourie, 
\newblock Un lemme de Morse pour les surfaces convexes, 
\newblock {\em Invent. Math.} \textbf{141}(2000), 239--297.

\bibitem{LeBrun}
C.~R.~LeBrun,
\newblock{Twistor CR manifolds and three-dimensional conformal geometry},
\newblock {\em Trans. Amer. Math. Soc.} \textbf{284}(1984), 601--616

\bibitem{MS}
M.~Melko, I.~Sterling, 
\newblock {Application of soliton theory to the construction of 
pseudospherical surfaces in $\textbf{R}^3$},
\newblock {\em Ann. Global Anal. Geom.} \textbf{11}(1993), no.~1, 65--107.

\bibitem{Nomizu}
K.~Nomizu, 
\newblock Isometric immersions of the hyperbolic plane  into the hyperbolic space,
\newblock {\em Math. Ann.} \textbf{205}(1973), 181--192.

\bibitem{LoopGroup}
A.~Pressley and G.~Segal,
\newblock {\em Loop groups},
\newblock Oxford Mathematical Monographs. Oxford University Press, New York, 1986.

\bibitem{RS}
H.~Rosenberg, H.~Spruck, 
\newblock On the existence of convex hypersurfaces of constant Gauss curvature in hyperbolic space, 
\newblock {\em J. Differential Geom.} \textbf{40}(1994), 379--409.


\bibitem{Spivak}
M.~Spivak, 
\newblock {\em A comprehensive Introduction to Differential Geometry} IV, 
2nd ed., Publish or Perish, Wilmington, Del., 1979. 

\bibitem{TW}
 L.-F.~Tam, T.~Y.-H.~Wan,
 \newblock Harmonic diffeomorphisms into {C}artan-{H}adamard surfaces with prescribed {H}opf differentials. 
 \newblock {\em Comm. Anal. Geom.} \textbf{2}(1994), no.~4, 593--625.

\bibitem{Tenenblat}
 K.~Tenenblat, 
\newblock {\em Transformations of manifolds and applications to differential equations}, 
Longman, 1998.

\bibitem{Wan}
T.~Y.-H.~Wan,
\newblock Constant mean curvature surface, harmonic maps, and universal Teichm\"{u}ller space.
\newblock {\em J. Differential Geom.} \textbf{35}(1992), no.~3, 643--657.

\bibitem{WanAu}
T.~Y.-H.~Wan,  T.~K.-K. Au,
\newblock Parabolic constant mean curvature spacelike surfaces.
\newblock {\em Proc. Amer. Math. Soc.} \textbf{120}(1994), no.~2, 559--564.
\end{thebibliography}

\end{document}